\newtheorem{definition}{Definition}[section]
\newtheorem{theorem}[definition]{Theorem}
\newtheorem{lemma}[definition]{Lemma}
\newtheorem{proposition}[definition]{Proposition}
\newtheorem{corollary}[definition]{Corollary}
\theoremstyle{definition}
\newtheorem{remark}[definition]{Remark}
\newtheorem{example}[definition]{Example}
\numberwithin{equation}{section}
 \DeclareMathOperator{\card}{card}
\DeclareMathOperator{\pr}{pr} \DeclareMathOperator{\id}{id}
\DeclareMathOperator{\diam}{diam}
\begin{document}

\begin{center}
{\Large\bf Metric products and continuation of isotone functions}
\end{center}

\bigskip
\begin{center}
{\bf O. Dovgoshey, E. Petrov and G. Kozub}
\end{center}

\begin{abstract}
Let $\mathbb{R}_+=[0,\infty)$ and let $A\subseteq\mathbb{R}^n_+$. We
have found the necessary and sufficient conditions under which a
function  $\Phi:A\to\mathbb{R}_+$ has an isotone subadditive
continuation on $\mathbb{R}^n_+$. It allows us to describe the
metrics, defined on the Cartesian product $X_1\times\cdots\times
X_n$ of given metric spaces $(X_1,d_{X_1}),...,(X_n,...,d_{X_n})$,
generated by the isotone metric preserving functions on
$\mathbb{R}^n_+$. It also shows that the isotone metric preserving
functions $\Phi:\mathbb{R}^n_+\to\mathbb{R}_+$ coincide with the
first moduli
 of continuity of the nonconstant bornologous functions
 $g:\mathbb{R}^n_+\to\mathbb{R}_+$. We discuss some
 algebraic properties of sets $X\subseteq \mathbb{R}$ providing the
 existence of isometric embeddings $f:B\to X$ for every three-point  $B\subseteq
 \mathbb{R}$. In particular, we prove that every finite subset of
 $\mathbb{R}$ is isometric to some subset of transcendental real
 numbers.
\end{abstract}

\bigskip
{\bf Mathematics Subject Classification (2010):} Primary 54E35, Secondary 26A15, 26B25.

\bigskip
{\bf Key words:} metric product, isotone metric preserving function
of several variables, bornologous function, isotone subadditive
function, modulus of continuity.

\section{Introduction}
\normalsize Let $n$ be a positive integer number.
\begin{definition}\label{def1.1}
Let $(X_1,d_{X_1}),...,(X_n,d_{X_n})$ be metric spaces and let
\mbox{$P=X_1\times\cdots\times X_n$} be the Cartesian product of
$X_1,...,X_n$. A metric $d$ on $P$ is a \emph{metric product} if
there exists a function $\Phi:\mathbb{R}^n_+\to\mathbb{R}_+$ such
that the equality
\begin{equation}\label{eq1.1}
d((x_1,...,x_n),(y_1,...,y_n))=\Phi(d_{X_1}(x_1,y_1),...,d_{X_n}(x_n,y_n))
\end{equation}
holds for all  $(x_1,...,x_n)$, $(y_1,...,y_n)\in P$.
\end{definition}

\begin{definition}\label{def1.2}
A function $\Phi:\mathbb{R}^n_+\to\mathbb{R}_+$ is a \emph{metric
preserving} function (of $n$ variables) if
$\Phi(d_{X_1}(\cdot,\cdot),...,d_{X_n}(\cdot,\cdot))$ is a metric on
$X_1\times \cdots\times X_n$ for every collection of metric spaces
$(X_1,d_{X_1}),...,(X_n,d_{X_n})$.
\end{definition}

There were  published in recent decades a lot of works  dealing with
metric products. The Euclidean and the Minkowski ranks for arbitrary
metric spaces and their behavior with respect to products were
studied in ~\cite{BFS} and ~\cite{FS}. The papers ~\cite{M},
~\cite{T} and ~\cite{AF} concern the long-standing problem of S.
Ulam on the uniqueness of decomposition of metric spaces into metric
products of indecomposable factors. The search of conditions for
multiplicativity of metric properties in terms of properties of the
functions $\Phi$ is another natural problem arising under studies of
metric products (see, for  example, ~\cite{Te2}, ~\cite{HM},
~\cite{OS}, and ~\cite{DM2} for some results in this direction). The
metric products of an arbitrary infinite or finite family of metric
spaces were studied in ~\cite{BD}.

In the case where the number of factors in the Cartesian product equals one we obtain
 a new metric $\Phi \circ d_1$. The metrics of such form occupy a special
position among metric products (see ~\cite{Co}, ~\cite{Do}, ~\cite{Sr} and ~\cite{Te}
for the surveys on the metric preserving functions of one variable).

The present paper deals with the isotone metric preserving functions
and isotone metric products. In particular we characterize the
metrics which are isotone, amenable or subadditive  metric products
(see section 3 for the exact definitions and formulations of
results). These characterizations are based on the construction of
continuation of isotone amenable or isotone subadditive functions
defined on subsets of $\mathbb{R}^n_+$ (see section 2 which play an
auxiliary role in the paper).

In section 4 we prove that the isotone metric preserving functions
coincide with the moduli of continuity of nonconstant bornologous
functions $f:\mathbb{R}^n_+\to\mathbb{R}_+$. It is a generalization
of similar one-dimensional result from~\cite{DM}. The fifth final
section of the paper has the origin in an interesting observation of
I. Herbut and M. Moszy\`{n}ska that to establish some condition of
multiplicativity it often suffices to examine the metric products on
$\mathbb{R}^2\times\mathbb{R}^2$ with the usual Euclidean metrics in
the factors (see\cite{HM}). The main reason of this phenomenon is
the following universal property: for every metric triangle, there
is an isometric embedding in $\mathbb{R}^2$. The metric spaces which
are favorable for isometric embeddings of the triangles situated in
$\mathbb{R}=(-\infty,\infty)$ play a similar role for metric
preserving, isotone, functions (see Theorem \ref{th5.1}). Examples
of such spaces can be found in section 5.

\section{Continuation of isotone functions}

\noindent Let $\overline{x}=(x_1,\dots,x_n)$,
$\overline{y}=(y_1,\dots,y_n)$ be two vectors from $\mathbb{R}^n_+$.
Then we write:
  \begin{equation}\label{eq2.1}
  \begin{split}
  &\mbox{(i)  } \overline{x} \leqslant \overline{y} \Leftrightarrow x_i\leqslant y_i \mbox{ for every } i\in \{1,...,n\}; \\
  &\mbox{(ii)  }   \overline{x} < \overline{y} \Leftrightarrow \overline{x}\leqslant \overline{y} \mbox{ and } \overline{x}\neq\overline{y}; \\
  &\mbox {(iii)  } |\overline{x} - \overline{y}| := (|x_1 - y_1|,\dots,|x_n - y_n|).
  \end{split}
  \end{equation}

We need the following definition.
\begin{definition}\label{def2.1}
Let $A\subseteq \mathbb{R}^n_+$. A function $f:A\to \mathbb{R}_+$ is
isotone if the implication
$$
(\bar{x}\leqslant \bar{y}) \Rightarrow (f(\overline{x})\leqslant
f(\overline{y}))
$$
holds for all $\bar{x},\bar{y}\in A$.
\end{definition}
It is easy to see that $f:\mathbb{R}^n_+\to\mathbb{R}_+$ is isotone
if and only if $f$ is increasing by coordinates, i.e., $f$ is
separately increasing in every variable.

For $\bar{a} \in \mathbb{R}^n_+$  we denote by $\bar{a}^{\nabla}$
the lower cone of $\bar{a}$ in the partially ordered set
$(\mathbb{R}^n_+,\leqslant)$, i.e.,
$$
\bar{a}^{\nabla}=\{\bar{x}\in\mathbb{R}^n_+:\bar{x}\leqslant\bar{a}\}.
$$
 The following lemma gives us the necessary and sufficient conditions
 under which a ``partially defined'' isotone function is a
 restriction of a ``completely defined'' isotone function.
 \begin{lemma}\label{lem2.2}
 Let $A$ be a subset of $\mathbb{R}^n_+$ and let
 $\Phi:A\to\mathbb{R}_+$ be an isotone function. The following
 conditions are equivalent.
\begin{itemize}
  \item [(i)] There is an isotone function
  $\Psi:\mathbb{R}^n_+\to\mathbb{R}_+$ such that
  \begin{equation}\label{eq2.2}
  \Psi|_A=\Phi
  \end{equation}
  where $\Psi|_A$ is the restriction of $\Psi$ on the  set $A$.
  \item [(ii)] The inequality
  \begin{equation}\label{eq2.3}
    \sup\{\Phi(\bar{x}):\bar{x}\in A\cap \bar{b}^{\nabla}\}<\infty
  \end{equation}
holds for every $\bar{b}\in\mathbb{R}^n_+$.
\end{itemize}
 \end{lemma}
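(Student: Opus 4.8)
The implication (i) $\Rightarrow$ (ii) is the easy direction. If such a $\Psi$ exists, then for every $\bar b\in\mathbb{R}^n_+$ and every $\bar x\in A\cap\bar b^{\nabla}$ we have $\Phi(\bar x)=\Psi(\bar x)\leqslant\Psi(\bar b)<\infty$ by isotonicity of $\Psi$, so the supremum in \eqref{eq2.3} is bounded by $\Psi(\bar b)$.

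For the substantive direction (ii) $\Rightarrow$ (i), the plan is to define $\Psi$ by the natural ``lower'' formula
\[
\Psi(\bar b):=\sup\bigl(\{0\}\cup\{\Phi(\bar x):\bar x\in A\cap\bar b^{\nabla}\}\bigr),\qquad \bar b\in\mathbb{R}^n_+,
\]
where the adjoined $0$ handles the case $A\cap\bar b^{\nabla}=\varnothing$ (so that $\Psi$ is well-defined everywhere and takes values in $\mathbb{R}_+$; condition (ii) is exactly what guarantees finiteness). First I would check that $\Psi$ is isotone: if $\bar b_1\leqslant\bar b_2$, then $\bar b_1^{\nabla}\subseteq\bar b_2^{\nabla}$, hence the set over which the supremum is taken only grows, giving $\Psi(\bar b_1)\leqslant\Psi(\bar b_2)$. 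Then I would verify \eqref{eq2.2}, i.e. $\Psi(\bar a)=\Phi(\bar a)$ for $\bar a\in A$. The inequality $\Psi(\bar a)\geqslant\Phi(\bar a)$ is immediate since $\bar a\in A\cap\bar a^{\nabla}$. For the reverse inequality $\Psi(\bar a)\leqslant\Phi(\bar a)$, take any $\bar x\in A\cap\bar a^{\nabla}$; then $\bar x\leqslant\bar a$ with both points in $A$, so isotonicity of $\Phi$ on $A$ gives $\Phi(\bar x)\leqslant\Phi(\bar a)$, and since $\Phi(\bar a)\geqslant 0$ the adjoined $0$ causes no trouble; taking the supremum over such $\bar x$ yields $\Psi(\bar a)\leqslant\Phi(\bar a)$.

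The main (and essentially only) obstacle is a bookkeeping one rather than a conceptual one: one must be careful that $\Psi$ is genuinely real-valued, which is precisely the role of hypothesis (ii), and one must handle the edge case where the lower cone $\bar b^{\nabla}$ misses $A$ entirely — this is why the definition takes the supremum of $\{0\}\cup\{\Phi(\bar x):\dots\}$ rather than of the possibly-empty set alone. I would also note in passing that, by construction, $\Psi$ is the \emph{smallest} isotone extension of $\Phi$, since any isotone $\Psi'$ with $\Psi'|_A=\Phi$ satisfies $\Psi'(\bar b)\geqslant\Phi(\bar x)$ for all $\bar x\in A\cap\bar b^{\nabla}$, hence $\Psi'(\bar b)\geqslant\Psi(\bar b)$; this remark is not needed for the statement but clarifies why the formula is the right one.
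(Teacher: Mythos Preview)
Your proof is correct and follows essentially the same approach as the paper: both define the extension by $\Psi(\bar b)=\sup\{\Phi(\bar x):\bar x\in A\cap\bar b^{\nabla}\}$ (with the convention $\sup\varnothing=0$, which is exactly your adjoined $\{0\}$), verify isotonicity via the inclusion $\bar b_1^{\nabla}\subseteq\bar b_2^{\nabla}$, and check $\Psi|_A=\Phi$ using isotonicity of $\Phi$. Your closing remark on minimality is also present in the paper as Remark~\ref{rem2.3}.
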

\begin{proof}
The implication (i)$\Rightarrow$(ii) is trivial. Suppose that
condition (ii) is fulfilled. Let us define
  \begin{equation}\label{eq2.4}
\Phi^*(\bar{y}) := \sup\{\Phi(\bar{x}):\bar{x}\in A\cap
\bar{y}^{\nabla}\}, \quad \bar{y} \in\mathbb{R}^n_+.
  \end{equation}
Inequality~(\ref{eq2.3}) implies the double inequality
  \begin{equation}\label{eq2.5}
0\leqslant\Phi^*(\bar{a})<\infty
  \end{equation}
for every $\bar{a}\in\mathbb{R}^n_+$. (In particular we take
$\Phi(\bar{a})=\sup \varnothing =0$ if $\bar{a}^{\nabla}\cap
A=\varnothing$.) Hence $\Phi^*$ is a nonnegative function on
$\mathbb{R}^n_+$. Since the inclusion $\bar{a}^{\nabla}\cap
A\subseteq \bar{b}^{\nabla}\cap A$ holds for $\bar{a}\leqslant
\bar{b}$, the function $\Phi^*$ is isotone. Definition~\ref{def2.1}
and ~(\ref{eq2.4}) imply $\Phi(\bar{a})=\Phi^*(\bar{a})$ for
$\bar{a}\in A$. Consequently ~(\ref{eq2.2}) holds with
$\Psi=\Phi^*$. Condition (i) follows.
\end{proof}
\begin{remark}\label{rem2.3}
 It is easy to prove the inequality
$$
\Psi(\bar{x}) \geqslant \Phi^*(\bar{x})
$$
for every $\bar{x}\in\mathbb{R}^n_+$ and every isotone function
$\Psi:\mathbb{R}^n_+\to\mathbb{R}_+$ satisfying~(\ref{eq2.2}).
\end{remark}
\begin{definition}\label{def2.4}
Let $A\subseteq\mathbb{R}^n_+$ and let $\bar{0}=(0,...,0)\in A$. We
shall say that a function $\Phi:A\to\mathbb{R}^n_+$ is
\emph{amenable} if $\Phi(\bar{0})=0$ and $\Phi(\bar{x})>0$ for every
$\bar{x}\in A\backslash\{\bar{0}\}$.
\end{definition}
For $n=1$ and $A=\mathbb{R}_+$, Definition~\ref{def2.4} turns into
the definition of amenable functions from~\cite{Dob}.

 Let $t \in (0,\infty)$ and $j_0\in\{1,...,n\}$. Define the
vectors $\bar{t}^{j_0}=(t_1^{j_0},...,t_n^{j_0})\in \mathbb{R}^n_+$
by the rule
\begin{equation}\label{eq2.6}
t_i^{j_0}:=
\begin{cases}
t \quad\text{if } i=j_0, \\
0 \quad\text{if } i\neq j_0.
\end{cases}
\end{equation}
\begin{lemma}\label{lem2.5}
Let $A$ be a subset of $\mathbb{R}^n_+$, $\bar{0}\in A$ and let
$\Phi:A\to\mathbb{R}_+$ be an isotone amenable function. The
function $\Phi^*:\mathbb{R}^n_+\to\mathbb{R}_+$ defined
by~(\ref{eq2.4}) is amenable if and only if for every
$j_{0}\in\{1,...,n\}$ and $\varepsilon \in (0,\infty)$ there is
$t=t(\varepsilon,j_0)\in(0,\infty)$ such that
\begin{equation}\label{eq2.7}
t\leqslant\varepsilon \mbox{ and }\bar{t}^{j_0}\in A.
\end{equation}
\end{lemma}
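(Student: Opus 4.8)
The plan is to analyze the amenability of $\Phi^*$ in two directions. Recall that $\Phi^*$ is automatically isotone and nonnegative by Lemma~\ref{lem2.2}, and that $\Phi^*(\bar{0}) = \sup\{\Phi(\bar{x}) : \bar{x} \in A \cap \bar{0}^{\nabla}\}$; since $\bar{0}^{\nabla} = \{\bar{0}\}$ and $\bar{0} \in A$ with $\Phi(\bar{0}) = 0$ (as $\Phi$ is amenable), we get $\Phi^*(\bar{0}) = 0$ for free. So amenability of $\Phi^*$ reduces precisely to the strict positivity condition: $\Phi^*(\bar{y}) > 0$ for every $\bar{y} \in \mathbb{R}^n_+ \setminus \{\bar{0}\}$.

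First I would prove the ``if'' direction. Assume condition~(\ref{eq2.7}) holds for every $j_0$ and every $\varepsilon$. Take an arbitrary $\bar{y} = (y_1,\dots,y_n) \in \mathbb{R}^n_+ \setminus \{\bar{0}\}$; then $y_{j_0} > 0$ for some index $j_0$. Apply the hypothesis with $\varepsilon = y_{j_0}$ to obtain $t \in (0,\infty)$ with $t \leqslant y_{j_0}$ and $\bar{t}^{j_0} \in A$. Then $\bar{t}^{j_0} \leqslant \bar{y}$ (its only nonzero coordinate is $t \leqslant y_{j_0}$), so $\bar{t}^{j_0} \in A \cap \bar{y}^{\nabla}$, whence $\Phi^*(\bar{y}) \geqslant \Phi(\bar{t}^{j_0})$. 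Since $\bar{t}^{j_0} \neq \bar{0}$ (because $t > 0$) and $\Phi$ is amenable, $\Phi(\bar{t}^{j_0}) > 0$, so $\Phi^*(\bar{y}) > 0$. This establishes amenability of $\Phi^*$.

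For the ``only if'' direction, assume $\Phi^*$ is amenable, and fix $j_0 \in \{1,\dots,n\}$ and $\varepsilon \in (0,\infty)$. Consider the point $\bar{\varepsilon}^{j_0}$, which is nonzero, so amenability gives $\Phi^*(\bar{\varepsilon}^{j_0}) > 0$. By the definition~(\ref{eq2.4}), $\Phi^*(\bar{\varepsilon}^{j_0}) = \sup\{\Phi(\bar{x}) : \bar{x} \in A \cap (\bar{\varepsilon}^{j_0})^{\nabla}\}$; since this supremum is strictly positive it is not $\sup\varnothing = 0$, so there must exist some $\bar{x} \in A \cap (\bar{\varepsilon}^{j_0})^{\nabla}$ with $\Phi(\bar{x}) > 0$. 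Now $\bar{x} \leqslant \bar{\varepsilon}^{j_0}$ forces $x_i = 0$ for all $i \neq j_0$ and $x_{j_0} \leqslant \varepsilon$. Moreover $\bar{x} \neq \bar{0}$ (else $\Phi(\bar{x}) = 0$ by amenability of $\Phi$), so $t := x_{j_0} > 0$. Then $\bar{x} = \bar{t}^{j_0}$, and we have produced $t \in (0,\infty)$ with $t \leqslant \varepsilon$ and $\bar{t}^{j_0} = \bar{x} \in A$, which is exactly~(\ref{eq2.7}).

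The argument is almost entirely a matter of unwinding the definitions; the one point requiring a little care is the bookkeeping in the ``only if'' direction, namely ensuring that the witness $\bar{x}$ realizing positivity of the supremum genuinely lies on the $j_0$-th axis and is not the origin, so that it can be identified with $\bar{t}^{j_0}$ for an admissible $t$. I do not expect any substantial obstacle beyond that.
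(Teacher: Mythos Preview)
Your proof is correct and follows essentially the same approach as the paper: both reduce amenability of $\Phi^*$ to strict positivity at nonzero points, handle the ``if'' direction by locating $\bar{t}^{j_0}$ inside $A\cap\bar{y}^{\nabla}$, and handle the ``only if'' direction via the observation that $(\bar{\varepsilon}^{j_0})^{\nabla}$ consists only of points on the $j_0$-th axis. The sole cosmetic difference is that the paper argues the ``only if'' direction by contraposition (showing $A\cap(\bar{\varepsilon}^{j_0})^{\nabla}=\{\bar{0}\}$ forces $\Phi^*(\bar{\varepsilon}^{j_0})=0$), whereas you argue it directly.
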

\begin{proof}
Suppose that for every $j_0\in\{1,...,n\}$ and $\varepsilon\in
(0,\infty)$ there is $t=t(\varepsilon,j_0)\in (0,\infty)$ such
that~(\ref{eq2.7}) holds. It was shown in the proof of
Lemma~\ref{lem2.2} that $\Phi^*(\bar{0})=\Phi(\bar{0})=0$ and
$0\leqslant \Phi^*(\bar{x})<\infty$ for $\bar{x}\in\mathbb{R}^n_+$.
Consequently $\Phi^*$ is amenable if and only if
\begin{equation}\label{eq2.8}
\Phi(\bar{y})>0
\end{equation}
for $\bar{y}>\bar{0}$. Now we prove inequality~(\ref{eq2.8}).
By~(\ref{eq2.1}) the inequality $\bar{y}>\bar{0}$ holds if and only
if there are $\varepsilon>0$ and $j_0\in\{1,...,n\}$ such that
$y_{j_0}\geqslant \varepsilon$. By the supposition there is
$\bar{t}^{j_0}\in A$ for which
$$
\bar{0}<\bar{t}^{j_0}\leqslant \bar{y}.
$$
Consequently $\bar{t}^{j_0}\in A\cap \bar{y}^{\nabla}$. Hence
$$
\Phi^*(\bar{y})=\sup\{\Phi(\bar{x}):\bar{x}\in
A\cap\bar{y}^{\nabla}\}\geqslant \Phi(\bar{t}^{j_0})
$$
Since $\Phi$ is amenable and $\bar{t}^{j_0}>\bar{0}$ we have
$\Phi(\bar{t}^{j_0})>0$. Inequality~(\ref{eq2.8}) follows.

Conversely, suppose  that $\Phi^*(y)$ is amenable but there is
$j_0\in\{1,...,n\}$ and $\varepsilon>0$ such that
$$
\bar{t}^{j_0}\notin A
$$
for every $t\in (0,\varepsilon]$. Then
$A\cap\bar{\varepsilon}^{j_0\nabla}=\{\bar{0}\}$, so that
$$
\Phi^*(\bar{\varepsilon}^j)=\sup\{\Phi(\bar{x}):\bar{x}\in\{\bar{0}\}\}=\Phi(\bar{0})=0.
$$
Hence $\Phi^*$ is not amenable, contrary to the supposition.
\end{proof}

\begin{remark}\label{rem2.6}
The conclusion of Lemma~\ref{lem2.5} can be reformulated by the
following way.

\begin{itemize}
  \item The function $\Phi^*:A\to\mathbb{R}_{+}$ is amenable if and
  only if $A-\{\bar{0}\}$ is a coinitial subset of
  $\mathbb{R}^n_+-\{\bar{0}\}$.
\end{itemize}
\end{remark}

For every $\bar{a}=(a_1,...,a_n)\in\mathbb{R}^n_+$ and
$j\in\{1,...,n\}$ define $\pr_j(\bar{a})=a_j$ and denote by
$\bar{a}^{\Delta}$ the upper cone of $\bar{a}$, i.e.,
$$
\bar{a}^{\Delta} =
\{\bar{x}\in\mathbb{R}^n_+:\bar{x}\geqslant\bar{a}\}.
$$
\begin{lemma}\label{lem2.6}
Let $A$ be a subset of $\mathbb{R}^n_+$, $\bar{0}\in A$ and let
$\Phi:A\to\mathbb{R}_+$ be an isotone and amenable. The following
conditions are equivalent
\begin{itemize}
  \item [(i)] There is an isotone amenable function
  $\Psi:\mathbb{R}^n_+\to\mathbb{R}_+$ such that
  $$
\Psi|_{A}=\Phi.
  $$
  \item [(ii)] We have inequality~(\ref{eq2.3}) for every
  $\bar{b}\in\mathbb{R}^n_+$ and the equalities
  $$
\inf (\pr_{j}(B))=0, \quad j=1,...,n,
  $$
  hold for every $B\subseteq A$ with $\inf (\Phi(B))=0$.
\end{itemize}
\end{lemma}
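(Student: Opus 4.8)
The plan is to reduce the statement to Lemma~\ref{lem2.2} and Lemma~\ref{lem2.5}, so that everything hinges on understanding when the canonical extension $\Phi^*$ of~(\ref{eq2.4}) fails to be amenable. First I would prove the implication (i)$\Rightarrow$(ii). Inequality~(\ref{eq2.3}) is immediate from Lemma~\ref{lem2.2}, since the existence of an isotone (in particular) extension $\Psi$ already forces it. For the second half, suppose $B\subseteq A$ satisfies $\inf(\Phi(B))=0$ but $\inf(\pr_j(B))=c>0$ for some $j$. Pick any $\bar{x}\in B$; then $\pr_j(\bar{x})\geqslant c$, so $\bar{x}\geqslant \bar{c}^{\,j}$ in the order of~(\ref{eq2.1}). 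Since $\Psi$ is isotone this gives $\Phi(\bar{x})=\Psi(\bar{x})\geqslant\Psi(\bar{c}^{\,j})$, and the right-hand side is a fixed positive number because $\Psi$ is amenable and $\bar{c}^{\,j}>\bar{0}$. Taking the infimum over $\bar{x}\in B$ contradicts $\inf(\Phi(B))=0$. Hence (ii) holds.

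For (ii)$\Rightarrow$(i), the idea is to show that (ii) is exactly what is needed to run Lemma~\ref{lem2.5}, after which $\Phi^*$ from~(\ref{eq2.4}) is the desired $\Psi$. Inequality~(\ref{eq2.3}) for all $\bar{b}$ gives, via Lemma~\ref{lem2.2}, that $\Phi^*$ is a well-defined isotone extension of $\Phi$; it remains to check amenability. By Lemma~\ref{lem2.5}, $\Phi^*$ is amenable if and only if for every $j_0$ and every $\varepsilon>0$ there is $t\in(0,\varepsilon]$ with $\bar{t}^{\,j_0}\in A$. Suppose this fails: there are $j_0$ and $\varepsilon_0>0$ such that $\bar{t}^{\,j_0}\notin A$ for all $t\in(0,\varepsilon_0]$. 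I then want to build from this a set $B\subseteq A$ witnessing a violation of the second clause of (ii), i.e.\ with $\inf(\Phi(B))=0$ but $\inf(\pr_{j_0}(B))>0$.

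The construction of $B$ is the main obstacle, and it must use the amenability of $\Phi$ itself. Since $\Phi$ is amenable, $\Phi(\bar{x})>0$ for every $\bar{x}\in A\setminus\{\bar{0}\}$, so if $\inf(\Phi(B))=0$ for some $B$ then $B$ must contain points arbitrarily close (in $\Phi$-value) to $\bar{0}$; the content is to show these points cannot all have $j_0$-th coordinate bounded away from $0$ unless small vectors $\bar{t}^{\,j_0}$ lie in $A$. Concretely, consider the set $A_{\varepsilon_0}=\{\bar{x}\in A: \pr_{j_0}(\bar{x})\leqslant \varepsilon_0\}$. If $\inf\{\Phi(\bar{x}):\bar{x}\in A_{\varepsilon_0}\}=0$, pick a sequence $\bar{x}^{(k)}$ in $A_{\varepsilon_0}$ with $\Phi(\bar{x}^{(k)})\to 0$; one shows each $\bar{x}^{(k)}$ must in fact have small $j_0$-coordinate bounded away from zero would already contradict amenability arguments; passing to the set $B=\{\bar{x}^{(k)}\}$ gives $\inf(\Phi(B))=0$ while, by the assumed failure, $\pr_{j_0}(\bar{x}^{(k)})$ stays positive — indeed no $\bar{x}\in A$ with $\pr_{j_0}(\bar{x})$ arbitrarily small other than $\bar{0}$ is available along the $j_0$-axis, and the isotone structure lets one compare. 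If instead $\inf\{\Phi(\bar{x}):\bar{x}\in A_{\varepsilon_0}\}=\delta>0$, then since $\Phi(\bar{0})=0<\delta$, the value $\delta$ separates $\bar{0}$ from all of $A_{\varepsilon_0}\setminus\{\bar{0}\}$, and one argues directly that $\Phi^*(\bar{\varepsilon_0}^{\,j_0})=\sup\{\Phi(\bar{x}):\bar{x}\in A\cap(\bar{\varepsilon_0}^{\,j_0})^{\nabla}\}$; but $A\cap(\bar{\varepsilon_0}^{\,j_0})^{\nabla}=\{\bar{0}\}$ by the failure hypothesis (any $\bar{x}\leqslant\bar{\varepsilon_0}^{\,j_0}$ has all coordinates except the $j_0$-th equal to $0$, hence is of the form $\bar{t}^{\,j_0}$), so $\Phi^*(\bar{\varepsilon_0}^{\,j_0})=0$ and $\Phi^*$ is not amenable — which is precisely the conclusion of the negative direction of Lemma~\ref{lem2.5}, so one is back to needing clause two of (ii) to fail. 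I would organize this as: assume the Lemma~\ref{lem2.5} condition fails, deduce via the above that either $\Phi^*$ is already non-amenable (impossible if we are proving (i)) or a witnessing $B$ exists contradicting (ii); hence the Lemma~\ref{lem2.5} condition holds, $\Phi^*$ is amenable, and $\Psi=\Phi^*$ proves (i). The delicate point throughout is correctly exploiting that $(\bar{t}^{\,j_0})^{\nabla}$ consists only of vectors supported on the $j_0$-th axis, together with amenability of $\Phi$, to convert a coordinate-projection statement into a statement about membership of axis points in $A$.
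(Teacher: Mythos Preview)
Your argument for (i)$\Rightarrow$(ii) is correct and matches the paper's. The gap is in (ii)$\Rightarrow$(i): you claim that condition (ii) forces the hypothesis of Lemma~\ref{lem2.5}, so that the lower-cone extension $\Phi^*$ of~(\ref{eq2.4}) is already amenable, but this implication is false. Take $n=1$, $A=\{0,1\}$, $\Phi(0)=0$, $\Phi(1)=1$. Then (ii) holds (any $B\subseteq A$ with $\inf\Phi(B)=0$ must contain $0$), and (i) holds with $\Psi(x)=x$; yet there is no $t\in(0,\varepsilon]$ with $t\in A$ for $\varepsilon<1$, so the Lemma~\ref{lem2.5} criterion fails and indeed $\Phi^*(1/2)=\Phi(0)=0$. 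For $n\geqslant 2$ the same phenomenon appears whenever $A$ misses an entire coordinate axis: with $A=\{\bar 0\}\cup\{\bar t^{\,2}:t>0\}$ and $\Phi(\bar t^{\,2})=t$, condition (ii) is satisfied but $A$ contains no $\bar t^{\,1}$ at all, and $\Phi^*(\bar t^{\,1})=0$ for every $t>0$. Your case analysis cannot rescue this, because $A_{\varepsilon_0}$ always contains $\bar 0$, so its $\Phi$-infimum is always $0$ and nothing prevents a minimizing sequence from consisting of $\bar 0$ itself, or of points whose $j_0$-th coordinate is exactly $0$; the failure of the Lemma~\ref{lem2.5} criterion concerns only axis vectors $\bar t^{\,j_0}$ and says nothing about other points of $A$ with small $j_0$-th coordinate.

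The paper therefore does not take $\Psi=\Phi^*$. Instead it first enlarges $A$ to a set $\Gamma$ containing enough axis points $\bar t^{\,j}$, defining the extension on those new points via the \emph{upper} cone, $\inf\{\Phi(\bar x):\bar x\in A\cap\bar a^{\Delta}\}$ (see~(\ref{eq2.13})); the second clause of (ii) is used precisely to show that this infimum is strictly positive, so that the extended function remains amenable on $\Gamma$. A preliminary step handles coordinates $j$ with $\pr_j(A)=\{0\}$, where $A\cap(\bar t^{\,j})^{\Delta}$ is empty and one assigns values by hand. Only after $\Gamma$ has been built does the Lemma~\ref{lem2.5} criterion hold, and the final $\Psi$ is the construction~(\ref{eq2.4}) applied to the extension on $\Gamma$, not to $\Phi$ on $A$.
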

\begin{proof}
\textbf{(i)}$\Rightarrow$\textbf{(ii)} Suppose (i) holds. By
Lemma~\ref{lem2.2} inequality~(\ref{eq2.3}) holds for every
$\bar{b}\in\mathbb{R}^n_+$. Assume that there are $B\subseteq A$ and
$j_0\in\{1,...,n\}$ with
\begin{equation}\label{eq2.9}
\inf (\Phi(B))=0
\end{equation}
and
\begin{equation}\label{eq2.10}
t=\inf (\pr_j(B))>0
\end{equation}
From~(\ref{eq2.9}) and~(\ref{eq2.10}) the inequality $t>\infty$
follows. Indeed, if $t=+\infty$, then $B=\varnothing$, so that $\inf
(\Phi(B))=+\infty$, contrary to~(\ref{eq2.9}). Let $\bar{t}^{j_0}$
be the vector defined by~(\ref{eq2.6}). Then the inequality
$\bar{t}^{j_0}\leqslant \bar{b}$ holds for every $\bar{b}\in B$.
Since $\Psi$ is amenable and isotone and $\bar{t}^{j_0}>\bar{0}$, we
obtain
$$
0<\Psi(\bar{t}^{j_0})\leqslant \inf (\Psi(B))=\inf (\Phi(B)),
$$
contrary to~(\ref{eq2.9}). This contradiction shows that the
implication (i)$\Rightarrow$(ii) is true.

\textbf{(ii)}$\Rightarrow$\textbf{(i)} Suppose that condition (ii)
is fulfilled. Consider first the case  when for every
$j_0\in\{1,...,n\}$ there is $\bar{x}\in A$ such that
\begin{equation}\label{eq2.11}
\pr_{j_0}(\bar{x})>0.
\end{equation}
Write
$$
\tau_j:=\sup(\pr_j A), \quad j\in \{1,...,n\}.
$$
Inequality~(\ref{eq2.11}) implies that $\tau_j>0$ for every $j$. Let
us continue $\Phi$ from $A$ to the set
\begin{equation}\label{eq2.12}
\Gamma =A \cup (\bigcup\limits_{j=1}^n\{\bar{t}^j:t\in(0,\tau_j)\})
\end{equation}
by the rule
\begin{equation}\label{eq2.13}
\Phi^*(\bar{a})=
\begin{cases}
\Phi(\bar{a}) \quad\text{if } a\in A, \\
\inf\{\Phi(\bar{x}):\bar{x}\in A\cap\bar{a}^{\Delta}\} \quad\text{if
} \bar{a} \in \bigcup\limits_{j=1}^n\{\bar{t}^j:t\in(0,\tau_j)\}.
\end{cases}
\end{equation}
If $\bar{a}\in
A\cap(\bigcup\limits_{j=1}^n\{\bar{t}^j:t\in(0,\tau_j)\})$, then
$\bar{a}\in A\cap\bar{a}^{\Delta}$ and, since $\Phi$ is isotone,
$\Phi(\bar{x})\geqslant \Phi(\bar{a})$ for every $\bar{x}\in
A\cap\bar{a}^{\Delta}$. Thus we obtain
$$
\Phi^*(\bar{a})=\inf \{\Phi(\bar{x}):\bar{x}\in A\cap
\bar{a}^{\Delta}\}=\Phi(a)
$$
for every $\bar{a}\in A\cap(\bigcup\limits_{j=1}^n\{\bar{t}^j:t\in
(0,\tau_j)\})$, i.e., $\Phi^*$ is correctly defined. Using the
isotonicity of $\Phi$ and the definition of $\Phi^*$ we can easily
show that $\Phi^*(\bar{x})$ is finite for every $\bar{x}\in \Gamma$
and $\Phi^*:\Gamma\to\mathbb{R}_+$ is isotone. To prove that
$\Phi^*$ is amenable suppose that there is $\bar{\gamma}\in \Gamma$
such that $\Phi^*(\bar{\gamma})=0$. Since $\Phi$ is amenable, the
last equality shows
\begin{equation}\label{eq2.14}
\bar{\gamma}\in \bigcup\limits_{j=1}^n\{\bar{t}^j:t\in(0,\tau_j)\}.
\end{equation}
Consequently
\begin{equation}\label{eq2.15}
0=\Phi^*(\bar{\gamma})=\inf \{\Phi(\bar{x}):\bar{x}\in A\cap
\bar{\gamma}^{\Delta}\} = \inf (\Phi(A\cap\bar{\gamma}^{\Delta})).
\end{equation}
Membership relation~(\ref{eq2.14}) implies that there are
$j_0\in\{1,...,n\}$ and $t\in (0,\tau_{j_0})$ such that
$\bar{\gamma} = \bar{t}^{j_0}$. Condition (ii) and~(\ref{eq2.15})
give us the equality
$$
\inf (\pr_{j_0}(A\cap\bar{t}^{j_0\Delta}))=0.
$$
In particular there is $\bar{x}\in (A\cap\bar{t}^{j_0\Delta})$ such
that
\begin{equation}\label{eq2.16}
\pr_{j_0}(\bar{x})<t.
\end{equation}
By definition of the upper cone  we have $\pr_j(\bar{x})\geqslant
\pr_j(\bar{t}^{j_0})$ for every $\bar{x}\in \bar{t}^{j_0\Delta}$ and
$j\in \{1,...,n\}$. Consequently $\pr_{j_0}(\bar{x})\geqslant
\pr_{j_0}(\bar{t}^{j_0})=t$, contrary to~(\ref{eq2.16}). Thus
$\Phi^*$ is amenable. Consider now the case where there is
$j\in\{1,...,n\}$ such that $\pr_j(A)=\{0\}$. We may assume, after a
suitable permutation, that there is $j_0\in\{1,...,n\}$ such that
$$
\tau_{j} = \sup (\pr_j(A)) = 0
$$
if and only if $1\leqslant j \leqslant j_0$. Let us continue $\Phi$
from $A$ to the set
$$
A_0 = A\cup(\bigcup\limits_{j=1}^{j_0}\{\bar{t}^j:t\in(0,\infty)\})
$$
as
\begin{equation*}
\Phi_0(\bar{a})=
\begin{cases}
\Phi(\bar{a}) \quad\text{if } a\in A, \\
t \quad\text{if } \bar{a} =\bar{t}^j \text{ with } j=\{1,...,j_0\}.
\end{cases}
\end{equation*}
It is easy to prove that  $\Phi_0:A_0\to\mathbb{R}_+$ is an isotone
and amenable continuation of $\Phi:A\to\mathbb{R}_+$. Now for every
$j\in \{1,...,n\}$ there is $\bar{x}\in A_0$ such that
inequality~(\ref{eq2.11}) holds. Consequently there is a
continuation $\Phi^*$ of $\Phi^0$ on the set $\Gamma$
(see~(\ref{eq2.12})) defined as in~(\ref{eq2.13}). To find a
continuation of $\Phi^*:\Gamma\to\mathbb{R}_+$  to an isotone
amenable function $\Psi:\mathbb{R}^n_+\to\mathbb{R}_+$ it suffices
to note that for every $j\in\{1,...,n\}$ and every $\varepsilon \in
(0,\infty)$ there is $t=t(\varepsilon,j)\in(0,\infty)$ such that
$t\leqslant \varepsilon$ and $\bar{t}^j\in\Gamma$. Hence a desirable
continuation can be obtained as in Lemma~\ref{lem2.5}.
\end{proof}
Recall that a function $\Phi:\mathbb{R}^n_+\to\mathbb{R}_+$ is
subadditive if the inequality
\begin{equation}\label{eq2.17}
\Phi(\bar{x}+\bar{y})\leqslant\Phi(\bar{x})+\Phi(\bar{y})
\end{equation}
holds for all $\bar{x}, \bar{y}\in \mathbb{R}^n_+$.

Now we shall give an extension of this property to the case of
isotone functions defined on arbitrary subsets of $\mathbb{R}^n_+$.
\begin{definition}\label{def2.7}
Let $A$ be a subset of $\mathbb{R}^n_+$. An isotone function
$\Phi:A\to\mathbb{R}_+$ is subadditive if the implication
\begin{equation}\label{eq2.18}
(\bar{x}\leqslant\sum\limits_{i=1}^{m}\bar{x}^{i})\Rightarrow
(\Phi(\bar{x})\leqslant\sum\limits_{i=1}^{m}\Phi(\bar{x}^{i}))
\end{equation}
holds for all $\bar{x}$, $\bar{x}^{1},...,\bar{x}^{m}\in A$ and
every positive integer number $m\geqslant 2$.
\end{definition}
\begin{remark}\label{rem2.7*}
It is easy to prove that a function $\Phi:A\to\mathbb{R}_+$ is
isotone and subadditive if and only if the
implication~(\ref{eq2.18}) holds for every $\bar{x}\in A$ and all
$\bar{x}^{1},...,\bar{x}^{m} \in A$ and every integer number
$\mathbf{m\geqslant 1}$.
\end{remark}

\begin{remark}\label{rem2.8}
If $\Phi:\mathbb{R}^n_+\to\mathbb{R}_+$ is isotone,
then~(\ref{eq2.17}) holds for all $\bar{x},\bar{y}\in\mathbb{R}^n_+$
if and only if ~(\ref{eq2.18}) is true for all $\bar{x}$,
$\bar{x}^{1},...,\bar{x}^{m}\in \mathbb{R}^n_+$ with $m\geqslant 2$.
Thus Definition~\ref{def2.7} is an equivalent to the usual
definition of subadditivity if $A=\mathbb{R}^n_+$ and $\Phi$ is
isotone.
\end{remark}
\begin{lemma}\label{lem2.9}
Let $A$ be a nonempty subset of  $\mathbb{R}^n_+$. The following
conditions  are equivalent for every function
$\Phi:A\to\mathbb{R}_+$.
\begin{itemize}
  \item [(i)] The function $\Phi$ is isotone and subadditive.
  \item [(ii)] There is an isotone and subadditive function $\Psi:\mathbb{R}^n_+\to\mathbb{R}_+$
such that $\Psi|_{A}=\Phi$.
\end{itemize}
\end{lemma}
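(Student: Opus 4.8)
The implication (ii)$\Rightarrow$(i) is immediate: if $\Psi:\mathbb{R}^n_+\to\mathbb{R}_+$ is isotone and subadditive, then by Remark~\ref{rem2.8} it satisfies the implication~(\ref{eq2.18}) for all vectors in $\mathbb{R}^n_+$, in particular for all vectors lying in $A$; hence $\Phi=\Psi|_A$ is isotone and subadditive in the sense of Definition~\ref{def2.7}. The substance of the lemma is the continuation statement (i)$\Rightarrow$(ii), and the plan is to produce $\Psi$ by an infimal--convolution formula adapted to the possible degeneracies of $A$. Let
$$
K:=\{k\in\{1,\dots,n\}:\pr_k(A)=\{0\}\}
$$
be the set of coordinates along which $A$ is degenerate, and for each $j\notin K$ fix $\bar{x}^{(j)}\in A$ with $\pr_j(\bar{x}^{(j)})>0$. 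For $\bar{y}\in\mathbb{R}^n_+$ I would set
$$
\Psi(\bar{y}):=\inf\Bigl\{\textstyle\sum_{i=1}^{m}\Phi(\bar{x}^{i})+\sum_{k\in K}\pr_k(\bar{y})\ :\ m\geqslant 1,\ \bar{x}^{1},\dots,\bar{x}^{m}\in A,\ \pr_j(\bar{y})\leqslant\sum_{i=1}^{m}\pr_j(\bar{x}^{i})\ \text{for all}\ j\notin K\Bigr\}.
$$
When $K=\varnothing$ this is the ordinary infimal convolution $\inf\{\sum_i\Phi(\bar{x}^i):\bar{y}\leqslant\sum_i\bar{x}^i\}$; the extra summand $\sum_{k\in K}\pr_k(\bar{y})$ is inserted precisely because no finite family of elements of $A$ can dominate $\bar{y}$ in the degenerate coordinates, and it plays the role that the ad hoc extension ``value $t$ at $\bar{t}^{j}$'' played in Lemma~\ref{lem2.6} — but chosen so that isotonicity survives even when $\Phi$ is not amenable.

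Next I would verify the three defining properties of $\Psi$. For well-definedness with values in $\mathbb{R}_+$: for each $j\notin K$ one dominates $\pr_j(\bar{y})$ by using sufficiently many copies of $\bar{x}^{(j)}$, and concatenating these families over $j\notin K$ gives an admissible family; since the set under the infimum is thus nonempty and each of its elements is a finite sum of nonnegative reals, $0\leqslant\Psi(\bar{y})<\infty$. For isotonicity: a family admissible for $\bar{z}$ is admissible for every $\bar{y}\leqslant\bar{z}$, and $\sum_{k\in K}\pr_k$ is isotone, so $\Psi(\bar{y})\leqslant\Psi(\bar{z})$. For subadditivity: the concatenation of a family admissible for $\bar{y}$ with one admissible for $\bar{z}$ is admissible for $\bar{y}+\bar{z}$, and $\sum_{k\in K}\pr_k$ is additive; taking infima over the two families separately yields $\Psi(\bar{y}+\bar{z})\leqslant\Psi(\bar{y})+\Psi(\bar{z})$.

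Finally, $\Psi|_A=\Phi$. For $\bar{a}\in A$ we have $\pr_k(\bar{a})=0$ for all $k\in K$, so taking $m=1$, $\bar{x}^{1}=\bar{a}$ in the definition gives $\Psi(\bar{a})\leqslant\Phi(\bar{a})$. Conversely, if $\bar{x}^{1},\dots,\bar{x}^{m}\in A$ satisfy $\pr_j(\bar{a})\leqslant\sum_i\pr_j(\bar{x}^i)$ for all $j\notin K$, then the same holds trivially for $k\in K$ (both sides are $0$), hence $\bar{a}\leqslant\sum_i\bar{x}^i$, and Remark~\ref{rem2.7*} gives $\Phi(\bar{a})\leqslant\sum_i\Phi(\bar{x}^i)=\sum_i\Phi(\bar{x}^i)+\sum_{k\in K}\pr_k(\bar{a})$; taking the infimum, $\Phi(\bar{a})\leqslant\Psi(\bar{a})$. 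This completes the construction. The main obstacle, and the only point that needs genuine care, is exactly the passage handled by introducing $K$: since $A$ need not contain $\bar{0}$ and $\Phi$ need not be amenable, one cannot invoke Lemma~\ref{lem2.6}, and one must isolate the degenerate coordinates and pay for them by a linear term, checking — as the above argument does implicitly — that this correction is compatible with isotonicity (which the naive extension assigning $\Phi(\bar{t}^{j})=t$ would violate as soon as $\Phi$ takes a positive value on $\bar{0}$).
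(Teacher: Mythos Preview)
Your proof is correct and follows the same core strategy as the paper: build $\Psi$ as the infimal convolution
\[
\Psi(\bar y)=\inf\Bigl\{\textstyle\sum_i\Phi(\bar x^{i}):\bar y\leqslant\sum_i\bar x^{i},\ \bar x^{i}\in A\Bigr\},
\]
after making sure the admissible families are nonempty. The only difference lies in how the degenerate coordinates $K=\{k:\pr_k(A)=\{0\}\}$ are dealt with. The paper first enlarges $A$ to $\Gamma=A\cup\bigcup_{j\in K}\{\bar t^{\,j}:t>0\}$, assigns a \emph{constant} value $c$ on $\Gamma\setminus A$, verifies (via an incomparability argument between nonzero points of $A$ and of $\Gamma\setminus A$) that this extension is still isotone and subadditive, and only then applies the infimal-convolution formula. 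You instead fold the correction directly into the formula as the additive linear term $\sum_{k\in K}\pr_k(\bar y)$, so that $\Psi=G+L$ with $G$ the infimal convolution constrained only on the non-degenerate coordinates and $L$ linear on the degenerate ones. This buys you a single closed expression valid in all cases and avoids the separate verification that $\Phi^{\circ}$ is isotone and subadditive on $\Gamma$; the paper's route, on the other hand, keeps the infimal-convolution formula in its standard form~(\ref{eq2.20}) at the cost of a preliminary extension step. Both arguments are equally valid, and your observation that the naive choice $\Phi(\bar t^{\,j})=t$ can fail isotonicity when $\bar 0\in A$ with $\Phi(\bar 0)>0$ correctly identifies why some care is needed here (the paper sidesteps this by using a constant rather than a linear value on the added rays).
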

\begin{proof}
The implication (ii)$\Rightarrow$(i) follows directly from
Remark~\ref{rem2.8}. Suppose now that $\Phi$ is isotone and
subadditive. We shall construct an isotone subadditive function
$\Psi:\mathbb{R}^n_+\to\mathbb{R}_+$ such that $\Psi|_{A}=\Phi$. For
every $\bar{x}\in\mathbb{R}^n_+$ define the subset
$\textbf{S}(\bar{x})=\textbf{S}(\bar{x},A)$ of the set
$\bigcup\limits_{k=1}^{\infty}A^k$, where $A^1=A$, $A^2=A\times A$,
$A^3=A\times A\times A$ and so on, by the rule
\begin{itemize}
  \item an element $(\bar{x}^1,...,\bar{x}^k)$ of the set $\bigcup\limits_{k=1}^{\infty}A^k$
  belongs to $\textbf{S}(\bar{x})$ if and only if
\begin{equation}\label{eq2.19}
\bar{x}\leqslant\sum\limits_{i=1}^k\bar{x}^i.
\end{equation}
\end{itemize}
First consider the case when $\textbf{S}(\bar{x})\neq \varnothing$
for every $\bar{x}\in\mathbb{R}^n_+$. Define the function
$\Psi:\mathbb{R}^n_+\to\mathbb{R}_+$ as
\begin{equation}\label{eq2.20}
\Psi(\bar{x}):=\inf\{\sum\limits_{i=1}^m\Phi(\bar{x}^i):(\bar{x}^1,...,\bar{x}^m)\in
\textbf{S}(\bar{x})\}
\end{equation}
(We have $\Psi(\bar{x})\in \mathbb{R}_+$ for every
$\bar{x}\in\mathbb{R}^n_+$ because the relation
$\textbf{S}(\bar{x})\neq \varnothing $ implies the inequality
$\Psi(\bar{x})<+\infty$.) Let us show that $\Psi$ gives us the
wanted continuation  of $\Phi$. Let
$\bar{x},\bar{y}\in\mathbb{R}^n_+$ and $\bar{y}\leqslant\bar{x}$.
The inequality $\bar{y}\leqslant\bar{x}$ and~(\ref{eq2.19}) imply
$\bar{y}\leqslant\sum\limits_{i=1}^m\bar{x}^i$. Hence we obtain the
inclusion $\textbf{S}(\bar{y})\supseteq \textbf{S}(\bar{x})$ for
$\bar{y}\leqslant \bar{x}$. Using this inclusion and~(\ref{eq2.20})
we see that the implication
$$
(\bar{y}\leqslant \bar{x})\Rightarrow (\Psi(\bar{y})\leqslant
\Psi(\bar{x}))
$$
holds. Thus $\Psi$ is isotone.

To prove the subadditivity of $\Psi$ consider arbitrary $\bar{x},
\bar{y}\in\mathbb{R}^n_+$. It follows from~(\ref{eq2.20}) that for
every $\varepsilon>0$ there are
$$
(\bar{x}^1,...,\bar{x}^m)\in \textbf{S}(\bar{x}) \mbox{ and }
(\bar{y}^1,...,\bar{y}^l)\in \textbf{S}(\bar{y})
$$
such that
\begin{equation}\label{eq2.22*}
\Psi (\bar{x})+\varepsilon\geqslant
\sum\limits_{i=1}^m\Phi(\bar{x}^i)\mbox{ and } \Psi
(\bar{y})+\varepsilon\geqslant \sum\limits_{i=1}^l\Phi(\bar{y}^i).
\end{equation}
 Let $\bar{z}=\bar{x}+\bar{y}$. Then we
have
\begin{equation}\label{eq2.21}
\bar{z}\leqslant \sum\limits_{i=1}^m \bar{x}^i+\sum\limits_{i=1}^l
\bar{y}^i
\end{equation}
Define $\bar{z}^i$, $i=1,...,m+l$ by the rule:
\begin{equation}\label{eq2.22}
\bar{z}^i=\left\{
            \begin{array}{ll}
              \bar{x}^i & \hbox{if  } 1\leqslant i\leqslant m; \\
              \bar{y}^{i-m} & \hbox{if  } m+1 \leqslant i\leqslant m+l.
            \end{array}
          \right.
\end{equation}
Inequality~(\ref{eq2.21}) shows that
$(\bar{z}^1,...,\bar{z}^m,\bar{z}^{m+1},...,\bar{z}^{m+l})\in
\textbf{S}(\bar{z})$. Inequalities ~(\ref{eq2.22*}),~(\ref{eq2.22})
and ~(\ref{eq2.20}) imply
$$
\Psi (\bar{z})\leqslant \sum\limits_{i=1}^{l+m}\Phi(\bar{z}^i)=
\sum\limits_{i=1}^{m}\Phi(\bar{x}^i)+
\sum\limits_{i=1}^{l}\Phi(\bar{y}^i)\leqslant \Psi
(\bar{x})+\Psi(\bar{y})+2\varepsilon.
$$
Letting $\varepsilon \to 0$, we have
$$
\Psi(\bar{z})\leqslant \Psi(\bar{x})+\Psi(\bar{y}).
$$
Thus $\Psi$ is subadditive.

It remains to verify that $\Psi|_{A}=\Phi$. Let $\bar{x}\in A$ and
$(\bar{x}^1,...,\bar{x}^k) \in \textbf{S}(\bar{x})$. Then,
by~(\ref{eq2.19}), we have  $\bar{x}\leqslant \sum\limits_{i=1}^k
\bar{x}^i$. If $k=1$, then $\bar{x}\leqslant\bar{x}^1$, so that
$\Phi(\bar{x})\leqslant \Phi(\bar{x}^1)$ because $\Phi$ is isotone.
If $k\geqslant 2$, then using the subadditivity of $\Phi$ we also
have the inequality $\Phi(\bar{x})\leqslant
\sum\limits_{i=1}^k\Phi(\bar{x}^i)$. Consequently the last
inequality holds for every $(\bar{x}^1,...,\bar{x}^k)\in
\textbf{S}(\bar{x})$. The inequality
\begin{equation}\label{eq2.23}
\Phi(\bar{x})\leqslant\Psi(\bar{x})
\end{equation}
follows. To prove the converse inequality
\begin{equation}\label{eq2.24}
\Psi(\bar{x})\leqslant\Phi(\bar{x})
\end{equation}
note that the point $\bar{x}^1=\bar{x}$, belongs to
$\textbf{S}(\bar{x})$ for every $\bar{x}\in A$.
Consequently~(\ref{eq2.24}) follows from~(\ref{eq2.20}).
Inequalities~(\ref{eq2.23}) and ~(\ref{eq2.24}) imply that
$\Phi(\bar{x})=\Psi(\bar{x})$ for every $\bar{x}\in A$, i.e.,
$\Psi|_{A}=\Phi$.

Consider now the case when there is
$\bar{a}=(a_1,...,a_n)\in\mathbb{R}^n_+$ such that
$\textbf{S}(\bar{a})=\varnothing$. This equality and the definition
of the set $\textbf{S}(\bar{x})$ imply that there exists
$j_0\in\{1,...,n\}$ such that
\begin{equation}\label{eq2.25}
\forall \bar{y} \in A \quad \pr_{j_0}(\bar{y})=0 \mbox{ but }
a_{j_0}=\pr_{j_0}(\bar{a})>0.
\end{equation}
Let us denote by $J_0$ the set of all elements $j_0$ of
$\{1,...,n\}$ satisfying condition~(\ref{eq2.25}). For every $t\in
(0,\infty)$ and $j_0\in J_0$ write $\bar{t}^{j_0}$ for the vector
defined by~(\ref{eq2.6}). Let us continue $\Phi$ from the set $A$ to
the set
\begin{equation}\label{eq2.25*}
\Gamma := A \cup (\bigcup\limits_{j\in
J_0}\{\bar{t}^j:t\in(0,\infty)\})
\end{equation}
by the rule
\begin{equation}\label{eq2.26}
\Phi^{\circ}(\bar{a})=\left\{
            \begin{array}{ll}
              \Phi(\bar{a}) & \hbox{if  } \bar{a}\in A; \\
              c & \hbox{if  } \bar{a} \in \Gamma\backslash A.
            \end{array}
          \right.
\end{equation}
where $c$ is an arbitrary point from $\mathbb{R}_+$. It is easy to
see that
$$
\Phi^{\circ}|_{A}=\Phi \mbox{ and } \textbf{S}(\bar{x},\Gamma)\neq
\varnothing
$$
for every $\bar{x}\in \mathbb{R}^n_+$. Hence if $\Phi^{\circ}$ is
isotone and subadditive, then replacing in~(\ref{eq2.20}) $\Phi$ by
$\Phi^{\circ}$ we can finish the proof.

In accordance with Remark~\ref{rem2.7*}, $\Phi^{\circ}$ is isotone
and subadditive if implication~(\ref{eq2.18}) holds for all
$\bar{x}, \bar{x}^1,...,\bar{x}^m\in \Gamma$ and every $m \geqslant
1$. It is clear that~(\ref{eq2.18}) holds  whenever
$\bar{x}=\bar{0}$.  To clarify the proof of~(\ref{eq2.18}) for
$\bar{x}>\bar{0}$ first consider the case $m=1$. Note that every
nonzero $\bar{x}\in A$ and nonzero $\bar{y}\in \Gamma\backslash A$
are incomparable, i.e., we have neither $\bar{x}\leqslant \bar{y}$
nor $ \bar{y} \leqslant \bar{x}$. Consequently if
$\bar{0}<\bar{x}\leqslant \bar{x}^1$, then either $\bar{x},
\bar{x}^1 \in A$ or $\bar{x},\bar{x}^1\in \Gamma \backslash A$. The
function $\Phi^{\circ}|_{A}=\Phi$ is isotone and subadditive by the
supposition. The function $\Phi^{\circ}|_{\Gamma \backslash A}$ is
also isotone and subadditive as a constant function.
Consequently~(\ref{eq2.18}) holds when $m=1$, i.e., $\Phi^{\circ}$
is isotone. Let us consider an arbitrary $m\geqslant 1$. Suppose
that $\bar{0}<\bar{x}\leqslant \sum\limits_{i=1}^m \bar{x}^i$,
$\bar{x}\in A$ and $\bar{x}^1,...,\bar{x}^m \in \Gamma$. Define a
subset $I$ of the set $\{1,...,m\}$ by the rule
$$
(i\in I)\Leftrightarrow (i\in \{1,...,m\} \mbox{ and } \bar{x}^i\in
A).
$$
Then, using the definition of the set $\Gamma$, we obtain the
inequality $\bar{x}\leqslant \sum\limits_{i\in I}\bar{x}^i$. (Note
that the implication
$$
(\bar{x} \in \Gamma\backslash A \mbox{ and }\bar{0}<\bar{x}\leqslant
\sum\limits_{i=1}^m \bar{x}^i)\Rightarrow (\bar{x}\leqslant
\sum\limits_{i\in I}\bar{x}^i)
$$
can be considered as a generalization of incomparability of nonzero
$\bar{x}\in A$ and $\bar{y}\in \Gamma \backslash A$.) Since
$\bar{x}^i\in A$ for $i\in I$, the inequality $\bar{x}\leqslant
\sum\limits_{i\in I}\bar{x}^i$ implies
$$
\Phi^{\circ}(x)=\Phi(\bar{x})\leqslant \sum\limits_{i\in
I}\Phi(\bar{x}^i)= \sum\limits_{i\in
I}\Phi^{\circ}(\bar{x}^i)\leqslant
\sum\limits_{i=1}^{m}\Phi^{\circ}(\bar{x}^i).
$$
Similarly we can show the inequality
$$
\Phi^{\circ}(\bar{x}^i)\leqslant \sum\limits_{i=1}^m
\Phi^{\circ}(\bar{x}^i)
$$
 for $\bar{x}\in \Gamma\backslash A$ and
$\bar{x}^1,...,\bar{x}^m\in \Gamma$. Thus~(\ref{eq2.18}) holds for
all $\bar{x}, \bar{x}^1,...,\bar{x}^m \in \Gamma$.
\end{proof}
We finish this section by the following proposition, which is
interesting in its own right.
\begin{theorem}\label{th2.10}
Let $A$ be a subset of $\mathbb{R}^n_+$ such that for every  $j\in
\{1,...,n\}$ there is $\bar{a}\in A$ satisfying the inequality
$\pr_{j}(\bar{a})>0$. Then, for every $\Phi:A\to\mathbb{R}_+$, the
function $\Psi:\mathbb{R}^n_+\to\mathbb{R}_+$ defined
by~(\ref{eq2.20}) has the following properties.
\begin{itemize}
  \item [(i)] $\Psi$ is isotone and subadditive.
  \item [(ii)] The inequality $\Phi(\bar{x})\geqslant\Psi(\bar{x})$
  holds for every $\bar{x}\in A$.
  \item [(iii)] The equality $\Phi=\Psi|_{A}$ holds if and only if
  $\Phi$ is isotone and subadditive.
  \item [(iv)] If $F:\mathbb{R}^n_+\to\mathbb{R}_+$ is isotone
  subadditive function such that the inequality
  $\Phi(\bar{x})\geqslant F(\bar{x})$ holds for every $\bar{x}\in A$,
  then the inequality $\Psi(\bar{x})\geqslant F(\bar{x})$ also holds
  for every $\bar{x}\in \mathbb{R}^n_+$.
\end{itemize}
\end{theorem}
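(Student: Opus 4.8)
The plan is to recognize that Theorem~\ref{th2.10} essentially re-packages the construction of Lemma~\ref{lem2.9}, adding one genuinely new ingredient, the maximality property (iv), from which the nontrivial direction of (iii) will follow almost for free. First I would note that the hypothesis on $A$ is exactly what is needed to keep $\textbf{S}(\bar{x},A)\neq\varnothing$ for every $\bar{x}\in\mathbb{R}^n_+$, so that~(\ref{eq2.20}) defines a finite-valued function $\Psi:\mathbb{R}^n_+\to\mathbb{R}_+$: given $\bar{x}=(x_1,\dots,x_n)$, pick for each $j$ a vector $\bar{a}^{j}\in A$ with $\pr_{j}(\bar{a}^{j})>0$ and take the tuple obtained by listing each $\bar{a}^{j}$ a sufficient number of times; its sum then dominates $\bar{x}$ coordinatewise, so it lies in $\textbf{S}(\bar{x})$. (This is the ``first case'' in the proof of Lemma~\ref{lem2.9}.)

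For (i), the verification that $\Psi$ is isotone and subadditive is word for word the corresponding part of the proof of Lemma~\ref{lem2.9}; that argument uses no property of $\Phi$ whatsoever, only the inclusion $\textbf{S}(\bar{y})\supseteq\textbf{S}(\bar{x})$ for $\bar{y}\leqslant\bar{x}$ and the concatenation rule~(\ref{eq2.22}) for witnessing tuples, so it applies verbatim here. Part (ii) is immediate: for $\bar{x}\in A$ the one-term tuple $(\bar{x})$ belongs to $\textbf{S}(\bar{x})$ by~(\ref{eq2.19}), hence $\Psi(\bar{x})\leqslant\Phi(\bar{x})$ by~(\ref{eq2.20}), which is exactly inequality~(\ref{eq2.24}).

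For (iv), let $F:\mathbb{R}^n_+\to\mathbb{R}_+$ be isotone and subadditive with $F(\bar{x})\leqslant\Phi(\bar{x})$ for all $\bar{x}\in A$. Fix $\bar{x}\in\mathbb{R}^n_+$ and an arbitrary $(\bar{x}^1,\dots,\bar{x}^m)\in\textbf{S}(\bar{x})$, so $\bar{x}\leqslant\sum_{i=1}^{m}\bar{x}^{i}$ with all $\bar{x}^{i}\in A$. Applying successively the isotonicity of $F$, the finite subadditivity of $F$ (which follows from~(\ref{eq2.17}) by induction, cf. Remark~\ref{rem2.8}), and the hypothesis $F\leqslant\Phi$ on $A$, one gets $F(\bar{x})\leqslant F\bigl(\sum_{i=1}^{m}\bar{x}^{i}\bigr)\leqslant\sum_{i=1}^{m}F(\bar{x}^{i})\leqslant\sum_{i=1}^{m}\Phi(\bar{x}^{i})$; taking the infimum over all tuples in $\textbf{S}(\bar{x})$ yields $F(\bar{x})\leqslant\Psi(\bar{x})$.

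Finally, for (iii): if $\Phi=\Psi|_{A}$, then, since $\Psi$ is isotone and subadditive on $\mathbb{R}^n_+$ by (i), its restriction $\Phi$ is isotone, and it is subadditive in the sense of Definition~\ref{def2.7}, because $\bar{x}\leqslant\sum_{i=1}^{m}\bar{x}^{i}$ with $\bar{x},\bar{x}^{i}\in A$ gives $\Phi(\bar{x})=\Psi(\bar{x})\leqslant\Psi\bigl(\sum_{i=1}^{m}\bar{x}^{i}\bigr)\leqslant\sum_{i=1}^{m}\Psi(\bar{x}^{i})=\sum_{i=1}^{m}\Phi(\bar{x}^{i})$. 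Conversely, if $\Phi$ is isotone and subadditive, Lemma~\ref{lem2.9} supplies an isotone subadditive $\widetilde{\Phi}:\mathbb{R}^n_+\to\mathbb{R}_+$ with $\widetilde{\Phi}|_{A}=\Phi$, and part (iv) applied to $F=\widetilde{\Phi}$ gives $\Psi\geqslant\widetilde{\Phi}$ on $\mathbb{R}^n_+$, hence $\Psi|_{A}\geqslant\widetilde{\Phi}|_{A}=\Phi$; together with (ii) this yields $\Psi|_{A}=\Phi$. (Alternatively the ``if'' half can be shown directly by reproducing inequalities~(\ref{eq2.23})--(\ref{eq2.24}).) Since every step rests on machinery already established, I do not expect a real obstacle; the only place calling for a little care is the passage just used to extract the nontrivial direction of (iii) from the maximality statement (iv), together with the initial check that the hypothesis on $A$ indeed prevents $\Psi$ from ever taking the value $+\infty$.
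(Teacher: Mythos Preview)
Your proposal is correct and is precisely the ``modification of the proof of Lemma~\ref{lem2.9}'' that the paper invokes without writing out: parts (i) and (ii) are lifted verbatim from the first case of that proof (which, as you observe, uses no hypothesis on $\Phi$), part (iv) is the genuinely new maximality computation, and you correctly note that the nontrivial half of (iii) can be obtained either directly via~(\ref{eq2.23})--(\ref{eq2.24}) or, more elegantly, by feeding the extension from Lemma~\ref{lem2.9} into (iv). The initial check that the projection hypothesis on $A$ forces $\textbf{S}(\bar{x},A)\neq\varnothing$ is exactly the point that places one in the ``first case'' of Lemma~\ref{lem2.9}, so nothing is missing.
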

This theorem can be proved by modification of the proof of
Lemma~\ref{lem2.9} so it can be omitted here.

\begin{remark}\label{rem2.11}
If for given  $\Phi:A\to\mathbb{R}_+$ there is a function
$\Psi:\mathbb{R}^n_+\to\mathbb{R}_+$ meeting conditions (i)-(iv),
then for every $j\in \{1,...,n\}$ there is $\bar{a}\in A$ such that
$\pr_j(\bar{a})>0$. It can be obtained from~(\ref{eq2.26}) with
$\Phi=\Psi|_{A}$.
\end{remark}
\section{From metric products to metric preserving functions}
Let us denote by $\mathfrak{F}^n_i$ the set of isotone metric
preserving functions.
\begin{theorem}\label{th3.1}
Let $f:\mathbb{R}^n_+\to\mathbb{R}_+$ be isotone. Then $f$ belongs
to $\mathfrak{F}^n_i$ if and only if $f$ is subadditive and
amenable.
\end{theorem}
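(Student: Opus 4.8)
The plan is to prove the two implications separately, pairing each of the three substantive metric axioms with one of the three hypotheses on $f$; none of the continuation lemmas of Section~2 are needed.

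For sufficiency I would assume $f$ isotone, subadditive and amenable, fix arbitrary metric spaces $(X_1,d_{X_1}),\dots,(X_n,d_{X_n})$ and consider the candidate function $d$ given by~(\ref{eq1.1}) on $P=X_1\times\cdots\times X_n$. Nonnegativity of $d$ and its symmetry are immediate from $f\colon\mathbb{R}^n_+\to\mathbb{R}_+$ and the symmetry of each $d_{X_i}$. For the identity of indiscernibles I would use amenability in both directions: if $p=q$ then all coordinate distances vanish, so $d(p,q)=f(\bar 0)=0$; conversely, $d(p,q)=0$ forces the vector of coordinate distances to be $\bar 0$ (since $f(\bar v)>0$ for every $\bar v\ne\bar 0$), hence $p=q$. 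The triangle inequality is the only place the two remaining hypotheses enter: writing $\bar a,\bar b,\bar c$ for the vectors of coordinate distances of the pairs $(p,r),(p,q),(q,r)$, the triangle inequalities in the factors give $\bar a\leqslant\bar b+\bar c$ in $(\mathbb{R}^n_+,\leqslant)$, so isotonicity gives $f(\bar a)\leqslant f(\bar b+\bar c)$ and subadditivity gives $f(\bar b+\bar c)\leqslant f(\bar b)+f(\bar c)$; chaining these yields $d(p,r)\leqslant d(p,q)+d(q,r)$. Since the factors were arbitrary, $f\in\mathfrak{F}^n_i$.

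For necessity I would assume $f$ isotone and metric preserving and read off each property from a judiciously chosen test configuration. Taking $p=q$ in any product gives $f(\bar 0)=d(p,p)=0$. For amenability at a fixed $\bar x\ne\bar 0$ with $x_{j_0}>0$, I would take $X_i$ to be the two-point subspace $\{0,x_i\}$ of $\mathbb{R}$ with the inherited metric (and a one-point space when $x_i=0$), and the points whose $i$-th coordinates are $0$ and $x_i$; these points differ in coordinate $j_0$, so $0<d(p,q)=f(\bar x)$. For subadditivity at fixed $\bar x,\bar y\in\mathbb{R}^n_+$, I would take $X_i=[0,x_i+y_i]\subseteq\mathbb{R}$ with the usual metric and the three points whose $i$-th coordinates are $0$, $x_i$, $x_i+y_i$; then $d_{X_i}(p_i,q_i)=x_i$, $d_{X_i}(q_i,r_i)=y_i$, $d_{X_i}(p_i,r_i)=x_i+y_i$, and the triangle inequality for the resulting product metric reads $f(\bar x+\bar y)=d(p,r)\leqslant d(p,q)+d(q,r)=f(\bar x)+f(\bar y)$, which is exactly~(\ref{eq2.17}).

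I do not expect a genuine obstacle: the whole argument is a bookkeeping exercise matching metric axioms to algebraic properties of $f$. The two places deserving a line of care are the use of degenerate one-point factors in the coordinates where a prescribed distance is $0$ (so that the constructed test points are genuinely distinct, which is precisely what amenability is being tested against), and the observation that in the sufficiency part the triangle inequality really does require isotonicity \emph{and} subadditivity together, since $\bar a$ may lie strictly below $\bar b+\bar c$ rather than equal it.
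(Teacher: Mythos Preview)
Your proof is correct and self-contained. The paper, by contrast, does not prove Theorem~\ref{th3.1} at all: it simply records that the result is ``a direct consequence of Theorem~2.6 from~\cite{BD} and Theorem~1 from Chapter~9 of~\cite{Dob}.'' What you have written is essentially the standard argument that underlies those cited results --- pairing amenability with the identity of indiscernibles and the pair (isotonicity, subadditivity) with the triangle inequality, and reading the converse off from collinear test configurations in $\mathbb{R}$. So your approach is not different in spirit from what the references presumably contain, but it has the advantage of being fully written out and independent of the external literature; the paper's version is shorter but not self-contained.
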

This theorem is a direct consequence
 of Theorem 2.6 from~\cite{BD} and Theorem 1 from Chapter 9 of ~\cite{Dob}.
\begin{definition}[\cite{DM2}]\label{def3.2}
 Let $(X_1,d_{X_1}),...,(X_n,d_{X_n})$ be metric spaces.
 A metric  $d$ defined on the product $X_1\times \cdots\times X_n$ is \emph{distance-increasing} if

\begin{equation}\label{eq3.1}
d((x_1,...,x_n),(y_1,...,y_n))\leqslant
d((\tilde{x}_1,...,\tilde{x}_n),(\tilde{y}_1,...,\tilde{y}_n))
\end{equation}
whenever
$$
(d_{X_1}(x_1,y_1),...,d_{X_n}(x_n,y_n))\leqslant(d_{X_1}(\tilde{x}_1,\tilde{y}_1),...,d_{X_n}(\tilde{x}_n,\tilde{y}_n)).
$$
\end{definition}

Recall that the distance set of a metric space $(X,d)$ is  the set
$$
D_X=\{d(x,y):x,y\in X\}.
$$
Let $(X_1,d_{X_1}),...,(X_n,d_{X_n})$ be metric spaces. We shall say
that a metric product $d$ is isotone (subadditive) if there is an
isotone (subadditive) function $\Phi:D_{X_1}\times\cdots\times
D_{X_n}\to\mathbb{R}_+$ such that~(\ref{eq1.1}) holds for all
$(x_1,...,x_n),(y_1,...,y_n)\in X_1\times\cdots\times X_n$.

\begin{lemma}\label{lem3.3}
Let $(X_1,d_{X_1}),...,(X_n,d_{X_n})$ be nonempty metric spaces and
let $d$ be a metric defined on $P=X_1\times\cdots\times X_n$. The
following conditions are equivalent.
\begin{itemize}
  \item [(i)] $d$ is an isotone metric product.
  \item [(ii)] $d$ is distance-increasing.
\end{itemize}
\end{lemma}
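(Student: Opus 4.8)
The plan is to prove the two implications separately: (i)$\Rightarrow$(ii) is immediate from isotonicity, and essentially all the content is in the converse, which is a direct construction of the representing function.

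\textbf{(i)}$\Rightarrow$\textbf{(ii)} Assume $d$ is an isotone metric product and fix an isotone $\Phi\colon D_{X_1}\times\cdots\times D_{X_n}\to\mathbb{R}_+$ satisfying~\eqref{eq1.1}. If $(x_1,\dots,x_n),(y_1,\dots,y_n),(\tilde x_1,\dots,\tilde x_n),(\tilde y_1,\dots,\tilde y_n)\in P$ are such that $(d_{X_i}(x_i,y_i))_{i=1}^n\leqslant(d_{X_i}(\tilde x_i,\tilde y_i))_{i=1}^n$, then both of these vectors lie in $D_{X_1}\times\cdots\times D_{X_n}$, and isotonicity of $\Phi$ together with~\eqref{eq1.1} yields precisely the inequality~\eqref{eq3.1}. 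Hence $d$ is distance-increasing.

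\textbf{(ii)}$\Rightarrow$\textbf{(i)} Assume $d$ is distance-increasing and put $A=D_{X_1}\times\cdots\times D_{X_n}$; since every $X_i$ is nonempty, $A$ is a nonempty subset of $\mathbb{R}^n_+$ containing $\bar 0$. For $\bar t=(t_1,\dots,t_n)\in A$ choose, for each $i$, points $x_i,y_i\in X_i$ with $d_{X_i}(x_i,y_i)=t_i$, and set
\[
\Phi(\bar t):=d\bigl((x_1,\dots,x_n),(y_1,\dots,y_n)\bigr).
\]
The key point is that $\Phi$ is well defined: if $x_i',y_i'\in X_i$ is another choice with $d_{X_i}(x_i',y_i')=t_i$ for all $i$, then the two pairs have the same distance vector $\bar t$, and, since $\bar t\leqslant\bar t$, applying~\eqref{eq3.1} to these two pairs in either order forces $d((x_1,\dots,x_n),(y_1,\dots,y_n))=d((x_1',\dots,x_n'),(y_1',\dots,y_n'))$. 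In particular, for any $(u_1,\dots,u_n),(v_1,\dots,v_n)\in P$ we may take these very points as representatives of the vector $(d_{X_i}(u_i,v_i))_{i=1}^n\in A$, which gives~\eqref{eq1.1} for this $\Phi$. Finally, $\Phi$ is isotone: given $\bar s\leqslant\bar t$ in $A$, pick pairs realizing $\bar s$ and $\bar t$; then~\eqref{eq3.1} gives $\Phi(\bar s)\leqslant\Phi(\bar t)$. Thus $d$ is an isotone metric product. (If one additionally wants the representing function defined on all of $\mathbb{R}^n_+$ as in Definition~\ref{def1.1}, one checks that this $\Phi$ satisfies~\eqref{eq2.3} and extends it via Lemma~\ref{lem2.2}, which preserves isotonicity.)

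The only genuine obstacle is the well-definedness of $\Phi$ in the converse direction, and this is exactly where the monotonicity hypothesis~\eqref{eq3.1} is used in its symmetric form, namely on equal distance vectors; the remaining verifications (that $\Phi$ represents $d$ and that $\Phi$ is isotone) are routine unwinding of the definitions.
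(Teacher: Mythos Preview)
Your core argument is correct and is essentially the same as the paper's: both directions hinge on the observation that the distance-increasing condition, applied to pairs with \emph{equal} distance vectors, forces $d$ to depend only on the vector $(d_{X_i}(x_i,y_i))_i$, which is exactly what makes the function $\Phi$ on $D_{X_1}\times\cdots\times D_{X_n}$ well defined; isotonicity then follows from~\eqref{eq3.1}. The paper phrases this via a commutative diagram, you via an explicit choice of representatives, but the content is identical.

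However, your parenthetical remark at the end is wrong and should be deleted. It is \emph{not} true in general that the $\Phi$ you construct satisfies~\eqref{eq2.3}, so Lemma~\ref{lem2.2} does not always apply. Indeed, Example~\ref{ex3.6} in the paper exhibits a distance-increasing metric (hence an isotone metric product by the lemma you are proving) that admits no isotone continuation to $\mathbb{R}^n_+$; concretely, there $A=D_X=[0,1)$ and $\sup\{\Phi(t):t\in A\cap 1^\nabla\}=\sup_{t<1}\varphi(t)=\infty$. This is precisely why the paper needs the separate Theorem~\ref{th3.5} to characterize when such a continuation exists. Fortunately the lemma only asks for $\Phi$ on $D_{X_1}\times\cdots\times D_{X_n}$ (see the definition of ``isotone metric product'' just before the lemma), so your main argument already suffices.
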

\begin{proof}
The implication (i)$\Rightarrow$(ii) is clear. Suppose that $d$ is
distance-increasing. Let us prove that there exists an isotone
$$
\Phi:D_{X_1}\times\cdots\times D_{X_n}\to \mathbb{R}_+
$$
such that~(\ref{eq1.1}) holds for all $(x_1,...,x_n),
(y_1,...,y_n)\in P$.

Let us set  $d_{X_i}(x_i,y_i)=d_{X_i}(\tilde{x}_i,\tilde{y}_i)$,
$i=1,\dots,n$, in Definition~\ref{def3.2}. Then
inequality~(\ref{eq3.1}) implies
\begin{equation*}
d((x_1,...,x_n),(y_1,...,y_n))=d((\tilde{x}_1,...,\tilde{x}_n),(\tilde{y}_1,...,\tilde{y}_n)).
\end{equation*}
Thus, the function $d:P\times P\to\mathbb{R}_+$ depends only on the
distances $d_{X_1}(x_1,y_1),...,d_{X_n}(x_n,y_n)$. Consequently,
there exists a function
$$
    \Phi:D_{X_1}\times\cdots\times D_{X_n}\to\mathbb{R}_+^n
$$
such that the following diagram
\begin{equation}\label{eq.diag}
\begin{diagram}
\node{P\times P}
    \arrow[4]{e,t}{d}
    \arrow{s,l}{I}
\node[4]{\mathbb R^+}\\
\node{(X_1\times X_1)\times\cdots\times(X_n\times X_n)}
    \arrow[4]{e,t}{d_{X_1}\otimes\cdots\otimes d_{X_n}}
\node[4]{D_{X_1}\times\cdots\times D_{X_n}}
    \arrow{n,r}{\Phi}
\end{diagram}
\end{equation}
 is commutative, where $I$ is the identification mapping,
$$
I((x_1,...,x_n),(\tilde{x}_1,...,\tilde{x}_n))=((x_1,\tilde{x}_1),...,(x_n,\tilde{x}_n)),
$$
and $ d_{X_1}\otimes\cdots\otimes d_{X_n}$ is the direct product,
$$
 d_{X_1}\otimes\cdots\otimes d_{X_n}((x_1,\tilde{x}_1),...,(x_n,\tilde{x}_n))=(d_{X_1}(x_1,\tilde{x}_1),...,d_{X_n}(x_n,\tilde{x}_n)).
$$
Using the fact that $d$ is distance-increasing it is easy to show
that $\Phi$ is isotone.
\end{proof}

Let us define the metric $\rho_{\infty}$ on $P=X_1\times\cdots\times
X_n$ as
$$
\rho_{\infty}((x_1,...,x_n),(y_1,...,y_n))=\max\limits_{1\leqslant
i\leqslant n} d_{X_i}(x_i,y_i).
$$

We shall say that an isotone metric product $d$ has an isotone
(amenable, subadditive) continuation if there is an isotone
(amenable, subadditive) function
$\Phi:\mathbb{R}^n_+\to\mathbb{R}_+$ such that
$$
d((x_1,...,x_n),(y_1,...,y_n)) =
\Phi(d_{X_1}(x_1,y_1),...,d_{X_n}(x_n,y_n))
$$
for all $(x_1,...,x_n), (y_1,...,y_n) \in P$.

 Let $(X,d)$ and $(Y,\rho)$ be metric spaces. Recall that a
mapping $f:X\to Y$ is \emph{bornologous} if for every $\varepsilon
\in \mathbb{R}_+$ there is
$\delta=\delta(\varepsilon)\in\mathbb{R}_+$ such that the
implication
$$
(d(x,y)\leqslant \varepsilon)\Rightarrow (\rho(f(x),f(y))\leqslant
\delta)
$$
holds for all $x,y \in X$ (see \cite[p.6]{Ro}).

\begin{remark}\label{rem3.4}
It is easy to prove that $f:X\to Y$ is bornologous if and only if
there is an increasing $g:\mathbb{R}_+\to\mathbb{R}_+$ such that
\begin{equation}\label{eq3.3}
\rho(f(x),f(y))\leqslant g(d(x,y))
\end{equation}
for all $x,y\in X$.
\end{remark}
\begin{theorem}\label{th3.5}
Let $(X_1,d_{X_1}),...,(X_n,d_{X_n})$ be metric spaces. The
following conditions are equivalent for every metric $d$ on
$P=X_1\times\cdots\times X_n$.
\begin{itemize}
  \item [(i)] $d$ is an isotone metric product and has an isotone continuation.
  \item [(ii)] $d$ is distance-increasing and the identical mapping
  $$
    P\ni (x_1,...,x_n)\mapsto (x_1,...,x_n)\in P
  $$
  is bornologous as a mapping from $(P,\rho_{\infty})$ to $(P,d)$.
\end{itemize}
\end{theorem}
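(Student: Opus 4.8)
The plan is to bootstrap from the two lemmas already proved, since the genuine analytic content lives there. First I would invoke Lemma~\ref{lem3.3}: $d$ is an isotone metric product if and only if $d$ is distance-increasing, and in that case the proof of Lemma~\ref{lem3.3} produces an isotone function on $A:=D_{X_1}\times\cdots\times D_{X_n}$; call it $\Phi$. This $\Phi$ is uniquely determined, because every element of $A$ has the form $(d_{X_1}(x_1,y_1),\dots,d_{X_n}(x_n,y_n))$ for suitable points and $\Phi$ of it is forced to equal $d((x_1,\dots,x_n),(y_1,\dots,y_n))$ (which depends only on the distances, precisely because $d$ is distance-increasing). Consequently the restriction to $A$ of any isotone continuation of $d$ to $\mathbb{R}^n_+$ coincides with $\Phi$, and conversely any isotone continuation of $\Phi$ serves as an isotone continuation of $d$. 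Hence condition (i) is equivalent to: $d$ is distance-increasing and $\Phi$ admits an isotone continuation $\Psi\colon\mathbb{R}^n_+\to\mathbb{R}_+$; condition (ii) is: $d$ is distance-increasing and the identity map $(P,\rho_\infty)\to(P,d)$ is bornologous. So the task reduces to proving, under the standing assumption that $d$ is distance-increasing, that $\Phi$ has an isotone continuation if and only if this identity map is bornologous.

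Next I would apply Lemma~\ref{lem2.2} to the isotone function $\Phi\colon A\to\mathbb{R}_+$: it has an isotone continuation to $\mathbb{R}^n_+$ if and only if $\sup\{\Phi(\bar x):\bar x\in A\cap\bar b^{\nabla}\}<\infty$ for every $\bar b\in\mathbb{R}^n_+$. The key observation is that it suffices to test this on the diagonal vectors: writing $\bar\varepsilon:=(\varepsilon,\dots,\varepsilon)$ for $\varepsilon\in\mathbb{R}_+$, the condition above is equivalent to $\sup\{\Phi(\bar x):\bar x\in A\cap\bar\varepsilon^{\nabla}\}<\infty$ for every $\varepsilon\in\mathbb{R}_+$. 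Indeed, given an arbitrary $\bar b=(b_1,\dots,b_n)$, put $\varepsilon:=\max_i b_i$; then $\bar b\leqslant\bar\varepsilon$, so $A\cap\bar b^{\nabla}\subseteq A\cap\bar\varepsilon^{\nabla}$ and the first supremum is bounded by the second; the reverse implication is trivial since each $\bar\varepsilon$ is itself a particular $\bar b$.

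Finally I would identify the diagonal condition with bornologousness. For $u=(x_1,\dots,x_n)$ and $v=(y_1,\dots,y_n)$ in $P$ one has $\rho_\infty(u,v)=\max_i d_{X_i}(x_i,y_i)$, the tuple $(d_{X_1}(x_1,y_1),\dots,d_{X_n}(x_n,y_n))$ runs over all of $A$ as $u,v$ run over $P$, and $d(u,v)=\Phi$ of that tuple. Therefore, for each fixed $\varepsilon\in\mathbb{R}_+$,
\begin{equation*}
\{d(u,v):u,v\in P,\ \rho_\infty(u,v)\leqslant\varepsilon\}=\{\Phi(\bar x):\bar x\in A\cap\bar\varepsilon^{\nabla}\}.
\end{equation*}
Thus finiteness of $\sup\{\Phi(\bar x):\bar x\in A\cap\bar\varepsilon^{\nabla}\}$ for every $\varepsilon$ says exactly that for every $\varepsilon\in\mathbb{R}_+$ there is a finite $\delta=\delta(\varepsilon)\in\mathbb{R}_+$ with $d(u,v)\leqslant\delta$ whenever $\rho_\infty(u,v)\leqslant\varepsilon$, i.e., the identity $(P,\rho_\infty)\to(P,d)$ is bornologous (Remark~\ref{rem3.4} may be quoted for the equivalent increasing-majorant formulation). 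Chaining the three equivalences yields (i)$\Leftrightarrow$(ii).

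Apart from the bookkeeping, the only real point is the passage from the quantifier ``for every $\bar b\in\mathbb{R}^n_+$'' in Lemma~\ref{lem2.2} to the single-parameter quantifier ``for every $\varepsilon\in\mathbb{R}_+$'' built into bornologousness; I expect this to be the crux, and it is precisely where the choice of $\rho_\infty$ as the $\ell^\infty$-type product metric is used, since the $\rho_\infty$-balls are exactly the preimages of the lower cones $\bar\varepsilon^{\nabla}$ and every lower cone $\bar b^{\nabla}$ is contained in such a diagonal cone. The remaining care needed is merely to make sure that the function $\Phi$ supplied by Lemma~\ref{lem3.3} is the same object fed into Lemma~\ref{lem2.2}, and that restricting a continuation to $A$ and extending again does not alter it, which is handled by the uniqueness noted in the first paragraph.
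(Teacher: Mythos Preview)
Your proof is correct and follows essentially the same route as the paper: both invoke Lemma~\ref{lem3.3} to equate ``isotone metric product'' with ``distance-increasing'', both feed the resulting $\Phi$ on $A=D_{X_1}\times\cdots\times D_{X_n}$ into Lemma~\ref{lem2.2}, and both reduce the quantifier over all $\bar b\in\mathbb{R}^n_+$ to diagonal vectors $\bar\varepsilon=(\varepsilon,\dots,\varepsilon)$ via $\bar b\leqslant\bar b_{\max_i b_i}$, then identify the resulting supremum condition with bornologousness through the set identity you wrote (the paper's equation~(\ref{eq3.8})). The only cosmetic difference is that you package the argument as a single chain of equivalences, whereas the paper proves (i)$\Rightarrow$(ii) and (ii)$\Rightarrow$(i) separately.
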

\begin{proof}
\textbf{(i)}$\Rightarrow$\textbf{(ii)}. Let $d$ be an isotone metric
product having an isotone continuation. By Lemma~\ref{lem3.3}, the
metric $d$ is distance-increasing. Let
$\Psi:\mathbb{R}^n_+\to\mathbb{R}_+$ be an isotone function such
that
\begin{equation}\label{eq3.4}
d((x_1,...,x_n),(y_1,...,y_n))=\Psi(d_{X_1}(x_1,y_1),...,d_{X_n}(x_n,y_n))
\end{equation}
for $(x_1,...,x_n)$, $(y_1,...,y_n)\in P$. Write
\begin{equation}\label{eq3.5}
A:=D_{X_1}\times\cdots\times D_{X_n}.
\end{equation}
The restriction $\Psi|_{A}$ is an isotone function on $A\subseteq
\mathbb{R}^n_+$ and the function $\Psi$ is an isotone continuation
of $\Psi|_A$. Consequently, by Lemma~\ref{lem2.2},  the inequality
$$
\sup\{\Psi|_{A}(\bar{x}):\bar{x} \in A\cap \bar{b}^{\nabla}\}<\infty
$$
holds for every $\bar{b}\in \mathbb{R}^n_+$. Let us take here
$\bar{b}=\bar{b}_{\varepsilon}=(\varepsilon,...,\varepsilon)$ with
 $\varepsilon \in\mathbb{R}_+$. We claim that the
inequality
\begin{equation}\label{eq3.6}
d((x_1,...,x_n),(y_1,...,y_n))\leqslant
g(\rho_{\infty}((x_1,...,x_n),(y_1,..,y_n)))
\end{equation}
holds for all $(x_1,..,x_n)$, $(y_1,..,y_n)\in P$ with
\begin{equation}\label{eq3.7}
g(\varepsilon):=\sup\{\Psi|_{A}(\bar{z}):\bar{z}\in
A\cap\bar{b}^{\nabla}_{\varepsilon}\},\quad \varepsilon \in
\mathbb{R}_+.
\end{equation}
Indeed, the relation  $\bar{z}\in
A\cap\bar{b}_{\varepsilon}^{\nabla}$ holds if and only if there are
$(x_1,...,x_n)$, $(y_1,...,y_n)\in P$ such that
$d_{X_i}(x_i,y_i)=z_i$, $i=1,...,n$, and
$$
\max\limits_{1\leqslant i\leqslant n}d_{X_i}(x_i,y_i)\leqslant
\varepsilon, \quad \mbox{i.e,}
$$
\begin{equation}\label{eq3.8}
\begin{split}
A\cap\bar{b}^{\nabla}_{\varepsilon}=\{d((x_1&,...,x_n),(y_1,...,y_n)):(x_1,...,x_n),(y_1,...,y_n)\in
P\\
& \mbox{ and } \rho_{\infty}((x_1,...,x_n),(y_1,...,y_n))\leqslant
\varepsilon\}.
\end{split}
\end{equation}
This equality, ~(\ref{eq3.4}) and ~(\ref{eq3.7}) imply
~(\ref{eq3.6}). Since $d$ is distance-increasing, condition (ii)
follows (see Remark~\ref{rem2.3}).

\textbf{(ii)}$\Rightarrow$\textbf{(i)}. Suppose that (ii) holds.
Lemma~ \ref{lem3.3} implies that there is an isotone function
$\Phi:D_{X_1}\times\cdots \times D_{X_n}\to\mathbb{R}_+$ such that
$$
d((x_1,...,x_n),(y_1,...,y_n))=\Phi(d_{X_1}(x_1,y_1),...,d_{X_n}(x_n,y_n))
$$
for all $(x_1,...,x_n)$, $(y_1,...,y_n)\in P$. To prove that $d$ has
an isotone continuation  we must find an isotone
$\Psi:\mathbb{R}^n_+\to\mathbb{R}_+$ for which
$$
\Psi |_{D_{X_1}\times \cdots \times D_{X_n}}=\Phi.
$$
To this end it suffices to prove the inequality
\begin{equation}\label{eq3.9}
\sup \{\Phi(\bar{x}):\bar{x}\in(D_{X_1}\times \cdots \times
D_{X_n})\cap \bar{b}^{\nabla}\}<\infty
\end{equation}
for every $\bar{b}\in\mathbb{R}^n_+$ (see Lemma~\ref{lem2.2}). It is
clear that $\bar{b}\leqslant\bar{b}_{\varepsilon}$ if $\varepsilon
=\max\limits_{1\leqslant i\leqslant n} b_i$. Hence $(D_{X_1}\times
\cdots \times D_{X_n})\cap \bar{b}^{\nabla}\subseteq (D_{X_1}\times
\cdots \times D_{X_n})\cap \bar{b}^{\nabla}_{\varepsilon}$.
Consequently inequality~(\ref{eq3.9}) follows from
$$
\sup\{\Phi(\bar{x}):\bar{x}\in (D_{X_1}\times \cdots \times
D_{X_n})\cap \bar{b}^{\nabla}_{\varepsilon}\}<\infty.
$$
Since (ii) holds, the last inequality is a consequence
of~(\ref{eq3.8}).
\end{proof}

Now we construct a distance-increasing metric which does not have
any isotone continuation. For the sake of simplicity, consider a
degenerate Cartesian product $P=X_1\times\cdots\times X_n$ with
$n=1$.
\begin{example}\label{ex3.6}
Let $X=[0,1)$ and let $\varphi:[0,1)\to[0,\infty)$ be a continuous
strictly increasing function with $\lim\limits_{t\to
1}\varphi(t)=\infty$ and $\lim\limits_{t\to\infty}\varphi(t)=0$.
Define a metric $\rho$ on $X$ by the rule
\begin{equation*}
\rho(x,y)=
\begin{cases}
\max\{x,y\} \quad\text{if } x\neq y \\
0 \quad\text{if } x=y.
\end{cases}
\end{equation*}
It is easy to see that $(X,\rho)$ is an ultrametric space, i.e., the
strong triangle inequality
$$
\rho(x,y)\leqslant \max\{\rho(x,z), \rho(z,y)\}
$$
holds for all $x, y ,z \in X$. The function $d:X\times
X\to\mathbb{R}_+$,
$$
d(x,y)=\varphi(\rho(x,y)), \quad x,y \in X
$$
is also a metric and even an ultrametric on $X$. Note that $d$ is
distance-increasing because $\varphi$ is increasing. We show that
the  identical mapping  $\id (x)=x$ is \textbf{uniformly continuous
but not bornologous} if we consider $\id$ as a mapping from
$(X,\rho)$ to $(X,d)$. The uniform continuity of $\id$ follows
directly from limit relation $\lim\limits_{t\to 0}\varphi(t)=0$. If
$\id$ is bornologous, then, by Remark~\ref{rem2.3}, there is an
increasing function $g:\mathbb{R}_+\to\mathbb{R}_+$ such that
$d(x,y)=\varphi(\rho(x,y))\leqslant g(\rho(x,y))$ for all $x,y \in
[0,1)$. Since $g$ is increasing and $\sup\limits_{x,y\in
X}\rho(x,y)\leqslant 1$, the inequality $d(x,y)\leqslant g(1)$ holds
for all $x,y \in X$. Hence the metric space $(X,d)$ is bounded,
contrary to the definition. By Theorem~\ref{th3.5}
$d=\varphi\circ\rho$ is a distance-increasing metric which does not
have isotone continuations.
\end{example}
 Analyzing the previous example we obtain the
following conditions under which the isotone metric products have
isotone continuations.
\begin{corollary}\label{cor3.7}
Let $(X_1,d_{X_1}),...,(X_n,d_{X_n})$ be metric spaces. Suppose that
each $(X_i,d_{X_i})$ is either unbounded ar there are $x_i,y_i \in
X_i$ such that
$$
d_{X_i}(x_i,y_i)=\diam X_i
$$
where $\diam X_i = \sup\{d_{X_i}(x,y):x,y\in X_i\}$. Then every
isotone metric product  on $P=X_1\times\cdots\times X_n$ has an
isotone continuation.
\end{corollary}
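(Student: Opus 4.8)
The plan is to apply Theorem~\ref{th3.5}: it suffices to verify that under the stated hypotheses, the identical mapping $\id:(P,\rho_\infty)\to(P,d)$ is bornologous for every isotone (equivalently, distance-increasing) metric product $d$ on $P$. By Theorem~\ref{th3.5} this, together with $d$ being distance-increasing, is equivalent to $d$ having an isotone continuation. Since $d$ is an isotone metric product, Lemma~\ref{lem3.3} supplies an isotone $\Phi:D_{X_1}\times\cdots\times D_{X_n}\to\mathbb{R}_+$ representing $d$; in view of Lemma~\ref{lem2.2} the whole question reduces to showing that $\sup\{\Phi(\bar z):\bar z\in(D_{X_1}\times\cdots\times D_{X_n})\cap\bar b^{\nabla}\}<\infty$ for every $\bar b\in\mathbb{R}^n_+$.

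First I would reduce, as in the proof of Theorem~\ref{th3.5}, to cubes $\bar b_\varepsilon=(\varepsilon,\dots,\varepsilon)$: since any $\bar b$ is dominated by $\bar b_\varepsilon$ with $\varepsilon=\max_i b_i$, it is enough to bound $\Phi$ on $(D_{X_1}\times\cdots\times D_{X_n})\cap\bar b_\varepsilon^{\nabla}$. The key observation is then the following: for each $i$, the hypothesis gives that $\diam X_i$ is \emph{attained} whenever $X_i$ is bounded (it is the distance between some $x_i,y_i\in X_i$), and $\diam X_i=\infty$ when $X_i$ is unbounded. In either case, the set $D_{X_i}$ has the property that $D_{X_i}\cap[0,\varepsilon]$ is itself the distance set of a bounded subspace of $X_i$ — indeed, if $\varepsilon\geqslant\diam X_i$ then $D_{X_i}\cap[0,\varepsilon]=D_{X_i}$ and $X_i$ is bounded (with diameter attained, hence $\varepsilon'=\diam X_i$ works); and if $\varepsilon<\diam X_i$ one can still localize. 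The cleanest route is to note that a distance-increasing metric on a product is automatically bounded on the "cube of radius $\varepsilon$" precisely when each factor contributes boundedly, and to quote the equality~(\ref{eq3.8}) which identifies $(D_{X_1}\times\cdots\times D_{X_n})\cap\bar b_\varepsilon^{\nabla}$ with the set of $d$-distances between pairs of points of $P$ that are $\rho_\infty$-close by at most $\varepsilon$.

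So the heart of the argument is to show that $\{d(u,v):u,v\in P,\ \rho_\infty(u,v)\leqslant\varepsilon\}$ is bounded for every $\varepsilon$. Fix such $u=(x_1,\dots,x_n)$, $v=(y_1,\dots,y_n)$. Replacing each coordinate one at a time and using the triangle inequality for $d$ reduces this to the one-factor situation: it is enough that for each $i$ and each $\varepsilon$, the set of $d$-distances between points of $P$ differing only in the $i$-th coordinate, with $d_{X_i}$-distance $\leqslant\varepsilon$, is bounded. Now pick witnesses $p_i,q_i\in X_i$ with $d_{X_i}(p_i,q_i)=\diam X_i$ if $X_i$ is bounded, or $d_{X_i}(p_i,q_i)\geqslant\varepsilon$ if $X_i$ is unbounded (possible since unboundedness forces arbitrarily large distances, hence distances $\geqslant\varepsilon$ by connectedness of $[0,\infty)$ values — more carefully, there exist points at distance $\geqslant\varepsilon$). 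In both cases $d_{X_i}(x_i,y_i)\leqslant\varepsilon\leqslant d_{X_i}(p_i,q_i)$ when $X_i$ is unbounded, while when $X_i$ is bounded $d_{X_i}(x_i,y_i)\leqslant\diam X_i=d_{X_i}(p_i,q_i)$. Hence the vector of coordinate-distances for the pair $(u,v)$ is dominated (coordinatewise, in the $i$-th slot, the others being equal) by that of the pair obtained by moving $p_i,q_i$ into place; since $d$ is distance-increasing, $d(u,v)$ is bounded by the $d$-distance of this fixed comparison pair, which does not depend on $x_i,y_i$. Iterating over $i=1,\dots,n$ produces a uniform bound $C_\varepsilon$ on $d(u,v)$. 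This gives~(\ref{eq3.9}), and the corollary follows from Lemma~\ref{lem2.2} and Theorem~\ref{th3.5}.

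The main obstacle I anticipate is the unbounded case: one must be sure that "unbounded" genuinely yields a single pair of points at distance $\geqslant\varepsilon$ that one can freeze for the comparison — this is immediate since unboundedness of $X_i$ means $\sup D_{X_i}=\infty$, so some value in $D_{X_i}$ exceeds $\varepsilon$, giving the required witnesses $p_i,q_i$. The rest is bookkeeping: replacing coordinates one at a time, invoking the triangle inequality for $d$ at most $n-1$ times, and at each step applying Definition~\ref{def3.2} (distance-increasing) to replace a "small" coordinate-distance by the fixed "large" one. No new ideas beyond Theorem~\ref{th3.5} and Lemma~\ref{lem2.2} are needed; the hypothesis on attainment of diameters is exactly what rules out the pathology of Example~\ref{ex3.6}, where $\diam X=1$ is not attained and the supremum over the "cube" $[0,1)$ of $\varphi$ is infinite.
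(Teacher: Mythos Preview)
Your proof is correct. The paper does not give a proof of this corollary beyond the remark ``Analyzing the previous example we obtain\ldots'', so your argument in fact supplies the missing details, and the route via Theorem~\ref{th3.5} and Lemma~\ref{lem2.2} is the intended one.

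One simplification: the coordinate-by-coordinate chain and the triangle inequality for $d$ are unnecessary. Once you have chosen, for each $i$, a pair $p_i,q_i\in X_i$ with $d_{X_i}(p_i,q_i)\geqslant\varepsilon$ (unbounded case) or $d_{X_i}(p_i,q_i)=\diam X_i$ (bounded-with-attained-diameter case), then for every $u=(x_1,\dots,x_n)$, $v=(y_1,\dots,y_n)$ with $\rho_\infty(u,v)\leqslant\varepsilon$ the full distance vector satisfies
\[
(d_{X_1}(x_1,y_1),\dots,d_{X_n}(x_n,y_n))\leqslant(d_{X_1}(p_1,q_1),\dots,d_{X_n}(p_n,q_n)).
\]
Since $d$ is distance-increasing (Definition~\ref{def3.2}), this gives directly
\[
d(u,v)\leqslant d((p_1,\dots,p_n),(q_1,\dots,q_n))=:C_\varepsilon,
\]
a single fixed bound. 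So the identity map $(P,\rho_\infty)\to(P,d)$ is bornologous, and Theorem~\ref{th3.5} finishes the argument. Your iterated version gives the bound $\sum_i\Phi(0,\dots,d_{X_i}(p_i,q_i),\dots,0)$ instead, which is also fine but takes an extra step.

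The exploratory second paragraph (about $D_{X_i}\cap[0,\varepsilon]$ being the distance set of a bounded subspace, and ``localizing'') is not needed and is not quite right as stated; I would drop it and go straight to the comparison-pair argument.
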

The next theorem gives us an intrinsic description of isotone metric
products having isotone amenable continuations.

\begin{theorem}\label{th3.8}
Let $(X_1,d_{X_1}),...,(X_n,d_{X_n})$ be metric spaces. The
following conditions are equivalent for every metric $d$ on
$P=X_1\times\cdots\times X_n$.
\begin{itemize}
  \item [(i)] $d$ is an isotone  metric product with an isotone amenable continuation.
  \item [(ii)] $d$ is distance-increasing and the identical mapping
  $$
   \id (x_1,...,x_n)=(x_1,...,x_n), \quad (x_1,...,x_n)\in P
  $$
  is bornologous as a mapping from $(P,\rho_{\infty})$ to $(P,d)$ and uniformly continuous
  as a mapping from $(P,d)$ to $(P,\rho_{\infty})$.
\end{itemize}
\end{theorem}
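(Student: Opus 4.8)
The plan is to build on Theorem~\ref{th3.5}. Since an isotone amenable continuation is in particular an isotone continuation, Theorem~\ref{th3.5} shows at once that condition~(i) here implies that $d$ is distance-increasing and $\id\colon(P,\rho_{\infty})\to(P,d)$ is bornologous, and conversely that the ``distance-increasing plus bornologous'' part of~(ii) already makes $d$ an isotone metric product possessing an isotone continuation. So the whole statement reduces to the following claim, to be proved under the standing hypothesis that $d$ is an isotone metric product with an isotone continuation (equivalently, by Lemma~\ref{lem3.3}, that $d$ is distance-increasing and some isotone continuation exists): an isotone continuation of $d$ can be chosen amenable if and only if $\id\colon(P,d)\to(P,\rho_{\infty})$ is uniformly continuous.

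To set up, I would apply Lemma~\ref{lem3.3} to obtain an isotone $\Phi\colon A\to\mathbb{R}_+$, with $A:=D_{X_1}\times\cdots\times D_{X_n}$, satisfying~(\ref{eq1.1}). Here $\bar 0\in A$, and $\Phi$ is amenable on $A$ in the sense of Definition~\ref{def2.4}: if $\bar z\in A\setminus\{\bar 0\}$ then some $z_{j_0}>0$, so any two points of $P$ with distance vector $\bar z$ differ in their $j_0$-th coordinates and hence $\Phi(\bar z)>0$. Since $d$ has an isotone continuation, Lemma~\ref{lem2.2} gives inequality~(\ref{eq2.3}) for every $\bar b\in\mathbb{R}^n_+$. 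By Lemma~\ref{lem2.6}, $\Phi$ extends to an isotone amenable function on $\mathbb{R}^n_+$ --- which is then automatically an isotone amenable continuation of $d$, because the distance vectors occurring in~(\ref{eq1.1}) all lie in $A$ --- if and only if, in addition to~(\ref{eq2.3}), one has $\inf(\pr_j(B))=0$ for all $j\in\{1,\dots,n\}$ whenever $B\subseteq A$ satisfies $\inf(\Phi(B))=0$. It thus suffices to match this last requirement with the uniform continuity of $\id\colon(P,d)\to(P,\rho_{\infty})$.

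For ``uniform continuity $\Rightarrow$ amenable continuation'', take $B\subseteq A$ with $\inf(\Phi(B))=0$ (so $B\neq\varnothing$), fix $j$ and $\varepsilon>0$, choose $\delta>0$ with $d(p,q)<\delta\Rightarrow\rho_{\infty}(p,q)\leqslant\varepsilon$, and pick $\bar z\in B$ with $\Phi(\bar z)<\delta$. Writing each $z_i=d_{X_i}(p_i,q_i)$ for suitable $p_i,q_i\in X_i$ and setting $p=(p_1,\dots,p_n)$, $q=(q_1,\dots,q_n)$, we get $d(p,q)=\Phi(\bar z)<\delta$, hence $z_j\leqslant\rho_{\infty}(p,q)\leqslant\varepsilon$; letting $\varepsilon\to 0$ yields $\inf(\pr_j(B))=0$, and Lemma~\ref{lem2.6} then supplies the desired isotone amenable $\Psi\colon\mathbb{R}^n_+\to\mathbb{R}_+$ with $\Psi|_A=\Phi$. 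For the converse, let $\Psi$ be an isotone amenable continuation of $d$; given $\varepsilon>0$, put $\delta:=\min_{1\leqslant j\leqslant n}\Psi(\bar{\varepsilon}^{j})$ with $\bar{\varepsilon}^{j}$ defined by~(\ref{eq2.6}) (taking $t=\varepsilon$), so $\delta>0$ since $\Psi$ is amenable and $\bar{\varepsilon}^{j}\neq\bar 0$. If $p,q\in P$ satisfy $\rho_{\infty}(p,q)>\varepsilon$ and $\bar w$ is the distance vector of $(p,q)$, then $\bar w\geqslant\bar{\varepsilon}^{j_0}$ for the index $j_0$ realizing the maximum, so $d(p,q)=\Psi(\bar w)\geqslant\Psi(\bar{\varepsilon}^{j_0})\geqslant\delta$ by isotonicity of $\Psi$; contraposition gives uniform continuity of $\id\colon(P,d)\to(P,\rho_{\infty})$ (shrinking $\varepsilon$ to $\varepsilon/2$ if a strict conclusion is wanted).

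I expect the only delicate points to be organizational: verifying that all hypotheses of Lemma~\ref{lem2.6} genuinely hold --- that $\bar 0\in A$, that $\Phi$ is amenable on $A$, and that~(\ref{eq2.3}) is available from the existence of an isotone continuation --- and carrying out the translation between ``$B\subseteq A$ with $\Phi$-values close to $0$'' and ``pairs of points of $P$ that are $d$-close'', which rests on the fact that every coordinate of a point of $A$ is an attained distance in the corresponding factor. Once Theorem~\ref{th3.5} and Lemma~\ref{lem2.6} are granted, nothing else requires real work.
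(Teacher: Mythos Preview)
Your proposal is correct and follows essentially the same route as the paper: reduce via Theorem~\ref{th3.5}, then invoke Lemma~\ref{lem2.6} and match its projection condition $\inf(\pr_j(B))=0$ with uniform continuity of $\id\colon(P,d)\to(P,\rho_\infty)$. The only cosmetic difference is in the direction (i)$\Rightarrow$(ii): the paper argues by contradiction with sequences (if $\id$ were not uniformly continuous, pick sequences with $d\to 0$ but $\rho_\infty\geqslant\varepsilon$ and bound $d$ below by $\Psi(\varepsilon,0,\dots,0)>0$), whereas you give a direct contrapositive with an explicit $\delta=\min_j\Psi(\bar\varepsilon^{\,j})$; the content is identical.
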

\begin{proof}
\textbf{(i)}$\Rightarrow$\textbf{(ii)} Suppose that condition (i) is
fulfilled. Then, in accordance with Theorem~\ref{th3.5}, $d$ is
distance-increasing and $\id$ is bornologous mapping from
$(P,\rho_{\infty})$ to $(P,d)$. Thus to prove (ii) it suffices to
show that $\id$ is an uniformly continuous mapping from $(P,d)$ to
$(P,\rho_{\infty})$. Suppose the contrary. Then there are
$\varepsilon>0$ and two sequences
$\{(x_1^i,...,x_n^i)\}_{i\in\mathbb{N}}$,
$\{(y_1^i,...,y_n^i)\}_{i\in\mathbb{N}}$ such that
\begin{equation}\label{eq3.10}
\lim\limits_{i\to\infty}d((x_1^i,...,x_n^i),(y_1^i,...,y_n^i))=0
\end{equation}
and
$$
\rho_{\infty}((x_1^i,...,x_n^i),(y_1^i,...,y_n^i))\geqslant
\varepsilon
$$
for every $i \in \mathbb{N}$. Passing to subsequences and reordering
the factors in $X_1\times\cdots\times X_n$ we may assume that
\begin{equation}\label{eq3.11}
d_{X_1}(x_1^i,y_1^i)\geqslant\varepsilon, \mbox{ for every } i \in
\mathbb{N}.
\end{equation}
By condition (i), there is an isotone amenable function
$\Phi:\mathbb{R}^n_+\to\mathbb{R}_+$ for which
$$
d((x_1,...,x_n),(y_1,...,y_n))=\Phi(d_{X_1}(x_1,y_1),...,d_{X_n}(x_n,y_n))
$$
for all $(x_1,...,x_n)$, $(y_1,...,y_n)\in P$. Using~(\ref{eq3.11})
we obtain
$$
d((x_1^i,...,x_n^i),(y_1^i,...,y_n^i))=\Phi(d_{X_1}(x_1^i,y_1^i),...,d_{X_n}(x_n^i,y_n^i))
$$
$$
\geqslant \Phi(d_{X_1}(x_1^i,y_1^i),0,...,0)\geqslant
\Phi(\varepsilon,0,...,0),
$$
contrary to~(\ref{eq3.10}). Consequently $\id$ is an uniformly
continuous mapping from $(P,d)$ to $(P,\rho_{\infty})$. Condition
(ii) follows.

\textbf{(i)}$\Rightarrow$\textbf{(ii)} Let condition (ii) hold. Then
by Theorem~\ref{th3.5} $d$ is an isotone metric product having an
isotone continuation. Let
$\Phi^{\circ}:\mathbb{R}^n_+\to\mathbb{R}_+$ be an isotone function
satisfying
$$
d((x_1,...,x_n),(y_1,...,y_n))=\Phi^{\circ}(d_{X_1}(x_1,y_1),...,d_{X_n}(x_n,y_n))
$$
for all $(x_1,...,x_n)$, $(y_1,...,y_n)\in P$. Define the function
$\Phi$ on the set \mbox{$A:=D_{X_1}\times\cdots\times D_{X_n}$} as
$$
\Phi:=\Phi^{\circ}|_{A}.
$$
It is clear that $\bar{0} \in A$ and $\Phi:A\to\mathbb{R}_+$ is
isotone and amenable. In accordance with Lemma~\ref{lem2.6},
condition (i) holds if
\begin{equation}\label{eq3.12}
\inf(\pr_j(B))=0,\quad  j=1,...,n,
\end{equation}
for every $B\subseteq A$ with $\inf(\Phi(B))=0$ and
$$
\sup\{\Phi(\bar{x}):\bar{x}\in A\cap\bar{b}^{\nabla}\}<\infty
$$
for  $\bar{b}\in\mathbb{R}^n_+$. The last inequality holds because
$\Phi(\bar{x})\leqslant \Phi^{\circ}(\bar{b})$ for every $\bar{x}\in
A\cap\bar{b}^{\nabla}$. Let $B\subseteq A$ and $\inf (\Phi(B))=0$.
The last equality implies that there are some sequences
$\{(x_1^i,...,x_n^i)\}_{i\in\mathbb{N}}$ and
$\{(y_1^i,...,y_n^i)\}_{i\in\mathbb{N}}$ such that
 $(d_{X_1}(x_1^i,y_1^i),...,d_{X_n}(x_n^i,y_n^i))\in B$ for all $i$
 and
\begin{equation}\label{eq3.13}
\lim\limits_{i\to\infty}d((x_1^i,...,x_n^i),(y_1^i,...,y_n^i))=0.
\end{equation}
Since the identical mapping $\id$ from $(P,d)$ to
$(P,\rho_{\infty})$ is uniformly continuous, equality~(\ref{eq3.13})
implies
$$
\lim\limits_{i\to\infty}\rho_{\infty}((x_1^i,...,x_n^i),(y_1^i,...,y_n^i))=0.
$$
The last equality and the inequalities
$$
d_{X_j}(x_j^i,y_j^i)\leqslant
\rho_{\infty}((x_1^i,...,x_n^i),(y_1^i,...,y_n^i)),\quad j=1,...,n,
$$
imply~(\ref{eq3.12}) for each $j\in\{1,...,n\}$.
\end{proof}

In the following theorem we show that every isotone subadditive
metric product has an isotone subadditive amenable continuation.
\begin{theorem}\label{th3.9}
Let $(X_1,d_{X_1}),...,(X_n,d_{X_n})$ be nonempty metric spaces. The
following conditions are equivalent for every metric $d$ on
$P=X_1\times\cdots\times X_n$.
\begin{itemize}
  \item [(i)] There is an isotone metric preserving function
  $\Psi:\mathbb{R}^n_+\to\mathbb{R}_+$ such that
  \begin{equation}\label{eq3.14}
    d((x_1,...,x_n),(y_1,...,y_n))=\Psi(d_{X_1}(x_1,y_1),...,d_{X_n}(x_n,y_n))
  \end{equation}
for all $(x_1,...,x_n), (y_1,...,y_n) \in P$.
  \item [(ii)] $d$ is an isotone subadditive metric product.
\end{itemize}
\end{theorem}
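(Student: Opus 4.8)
The plan is to prove the two implications separately; (i)$\Rightarrow$(ii) is routine and essentially all the work lies in (ii)$\Rightarrow$(i).

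For (i)$\Rightarrow$(ii): if $\Psi$ is isotone metric preserving and satisfies~(\ref{eq3.14}), then by Theorem~\ref{th3.1} $\Psi$ is subadditive and amenable, so by Remark~\ref{rem2.8} the implication~(\ref{eq2.18}) holds for all vectors of $\mathbb{R}^n_+$, hence for all vectors of $A:=D_{X_1}\times\cdots\times D_{X_n}$. Thus $\Phi:=\Psi|_A$ is isotone and subadditive in the sense of Definition~\ref{def2.7}, and by~(\ref{eq3.14}) it realizes $d$ via~(\ref{eq1.1}), so $d$ is an isotone subadditive metric product.

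For (ii)$\Rightarrow$(i): let $\Phi:A\to\mathbb{R}_+$ be isotone and subadditive with~(\ref{eq1.1}). First I would observe that $\Phi$ is amenable: since the $X_i$ are nonempty, $\bar0\in A$ and $\Phi(\bar0)=d(\bar p,\bar p)=0$, while for $\bar x\in A\setminus\{\bar0\}$ one may choose $\bar p\neq\bar q$ in $P$ with $d_{X_i}(p_i,q_i)=x_i$, so $\Phi(\bar x)=d(\bar p,\bar q)>0$. By Theorem~\ref{th3.1} it then suffices to extend $\Phi$ to an \emph{isotone subadditive amenable} function $\Psi:\mathbb{R}^n_+\to\mathbb{R}_+$, since then $\Psi$ is metric preserving and~(\ref{eq3.14}) follows from~(\ref{eq1.1}). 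The construction of $\Psi$ goes in two stages. Stage one: if $D_{X_j}=\{0\}$ for $j$ in some set $J_0$, enlarge $A$ to $A_0:=A\cup\bigcup_{j\in J_0}\{\bar{t}^j:t\in(0,\infty)\}$ and set $\Phi_0(\bar{t}^j):=t$ for $j\in J_0$, $\Phi_0:=\Phi$ on $A$, exactly as in the proof of Lemma~\ref{lem2.6}; the verification that $\Phi_0$ is isotone, subadditive and amenable rests on the fact that every new ray point is incomparable with every nonzero point of $A$, so that in any relation $\bar x\leqslant\sum_i\bar x^i$ with $\bar x\in A_0$ only the summands lying in the same stratum as $\bar x$ are relevant. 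Stage two: after this reduction every coordinate projection of the domain contains a positive number, so $\textbf{S}(\bar x)\neq\varnothing$ for every $\bar x$, and the function $\Psi$ defined by~(\ref{eq2.20}) with $A_0,\Phi_0$ in place of $A,\Phi$ is isotone and subadditive with $\Psi|_{A_0}=\Phi_0$, by Lemma~\ref{lem2.9} (or Theorem~\ref{th2.10}).

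The main obstacle is to see that this $\Psi$ is amenable, i.e. $\Psi(\bar x)>0$ for $\bar x>\bar0$. Fix such $\bar x$ and a coordinate $j_0$ with $x_{j_0}=c>0$. Because $A$ is a product and $\bar0\in A$, for every $\bar y\in A_0$ the vector $\bar{t}^{j_0}$ with $t=\pr_{j_0}(\bar y)$ again lies in $A_0$ and $\bar y\geqslant\bar{t}^{j_0}$, so $\Phi_0(\bar y)\geqslant\phi_{j_0}(\pr_{j_0}(\bar y))$, where $\phi_{j_0}(t):=\Phi_0(\bar{t}^{j_0})$ is, as one checks directly from the corresponding properties of $\Phi_0$, an isotone, subadditive, amenable function of one variable whose domain contains $0$ and a positive number. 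Projecting a decomposition $\bar x\leqslant\sum_i\bar x^i$ onto the $j_0$-th coordinate then gives $\Psi(\bar x)\geqslant\inf\{\sum_i\phi_{j_0}(t_i):t_i\in\pr_{j_0}(A_0),\ \sum_i t_i\geqslant c\}$, so it remains to prove that the one-dimensional extension of $\phi_{j_0}$ by formula~(\ref{eq2.20}) is amenable. This is the crux, and it follows from subadditivity by a copying argument: if that infimum were $0$, take decompositions with coordinate sums $\geqslant c$ and total $\phi_{j_0}$-mass $\sigma_k\to0$; fixing a positive $s$ in the domain and concatenating enough (say $N_s=\lceil s/c\rceil$) copies of the $k$-th decomposition so that the coordinate sum exceeds $s$, Definition~\ref{def2.7} yields $\phi_{j_0}(s)\leqslant N_s\,\sigma_k$ with $N_s$ independent of $k$, whence $\phi_{j_0}(s)=0$, contradicting amenability. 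Therefore $\Psi(\bar x)>0$, so $\Psi$ is amenable and the proof is complete. The delicate points are precisely this last estimate and the stratification bookkeeping of Stage one; everything else is a direct appeal to the results of Sections~2 and~3.
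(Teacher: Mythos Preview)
Your argument is correct and follows the same overall strategy as the paper: handle the degenerate coordinates $J_0=\{j:\card X_j=1\}$ first by enlarging the domain, then apply the infimum construction~(\ref{eq2.20}) from Lemma~\ref{lem2.9}, and finally verify amenability of the resulting $\Psi$ via a copying argument based on subadditivity.

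The differences are in execution. For Stage one the paper takes the Minkowski sum $B=A+\mathrm{cone}\{\bar e^j:j\in J_0\}$ and sets $\Phi^\circ(\bar a+\sum\alpha_j\bar e^j)=\Phi(\bar a)+c$ for a fixed $c>0$, whereas you adjoin only the coordinate rays and put $\Phi_0(\bar t^{\,j})=t$; both choices yield an isotone subadditive amenable extension with $\textbf{S}(\bar x)\neq\varnothing$ everywhere, so this is a matter of taste. For amenability the paper's argument is shorter: assuming $\Psi(\bar x)=0$ with $x_{j_0}>0$ and $j_0\notin J_0$, it picks a positive $s\in D_{X_{j_0}}$, an integer $m$ with $s\leqslant m\,x_{j_0}$, and applies the subadditivity of $\Psi$ \emph{directly on $\mathbb{R}^n_+$} to get
\[
\Phi(s\,\bar e^{j_0})=\Psi(s\,\bar e^{j_0})\leqslant m\,\Psi(x_{j_0}\bar e^{j_0})\leqslant m\,\Psi(\bar x)=0,
\]
contradicting amenability of $\Phi$. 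Your route---projecting decompositions to coordinate $j_0$, bounding $\Psi(\bar x)$ below by the one-dimensional extension of $\phi_{j_0}$ at $c$, and then running the concatenation argument there---is sound (and exploits the product structure of $A$ in the same way), but it is a detour: the concatenation trick you use on $\phi_{j_0}$ is exactly what the paper applies to $\Psi$ itself in one line.
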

\begin{proof}
The implication (i)$\Rightarrow$(ii) is almost evident (see
Remark~\ref{rem2.8}).

Suppose condition (ii)  holds. Then there is an isotone subadditive
function $\Phi:A\to \mathbb{R}_+$ such that
$$
  d((x_1,...,x_n),(y_1,...,y_n))=\Phi(d_{X_1}(x_1,y_1),...,d_{X_n}(x_n,y_n))
$$
for all $(x_1,...,x_n), (y_1,...,y_n)\in P$ where
$A=D_{X_1}\times\cdots \times D_{X_n}$. Let
$$
J_0:=\{j\in \{1,...,n\}:\card X_j=1\}.
$$
It is clear that $j\in J_0$ if and only if $\pr_j(\bar{a})=0$ for
every $\bar{a}\in A$. Let $\bar{e}^1=(1,0,...,0)$,
$\bar{e}^2=(0,1,...,0)$,...,$\bar{e}^n=(0,0,...,1)$ be the standard
basis vectors in $\mathbb{R}^n$. Denote by $B$ the Minkowski sum of
$A$ and of the convex cone generated by the set $\{\bar{e}^j:j\in
J_0\}$, i.e., $\bar{x}\in B$ if and only if there are
$\alpha_j\geqslant 0$, $j\in J_0$, and $\bar{a} \in A$ such that
 \begin{equation}\label{eq3.15}
\bar{x}=\bar{a}+\sum\limits_{j\in J_0}\alpha_j \bar{e}^j
  \end{equation}
  It is clear that $A\subseteq B$.
Since the linear space $\mathbb{R}^n$ is a direct sum of linear
spaces generating by the vectors $\bar{e}^j$, $j\in J_0$, and by the
set $A$, the equality
$$
\bar{a}+\sum\limits_{j\in J_0}\alpha_j \bar{e}^j =
\bar{b}+\sum\limits_{j\in J_0}\beta_j \bar{e}^j
$$
with $\bar{a}$, $\bar{b}\in A$ implies $\bar{a}=\bar{b}$ and
$\sum\limits_{j\in J_0}\alpha_j \bar{e}^j = \sum\limits_{j\in
J_0}\beta_j \bar{e}^j$. Let $c$ be a strictly positive constant.
Define a function  $\Phi^{\circ}:B\to\mathbb{R}_+$ by the rule: if
$\bar{x}\in B$ has a representation~(\ref{eq3.15}), then
\begin{equation}\label{eq3.16}
\Phi^{\circ}(\bar{x})=\left\{
            \begin{array}{ll}
              \Phi(\bar{a}) & \hbox{if  } \sum\limits_{j\in J_0}\alpha_j \bar{e}^j=0; \\
               \Phi(\bar{a}) +c & \hbox{if  } \sum\limits_{j\in J_0}\alpha_j \bar{e}^j\neq 0.
            \end{array}
          \right.
\end{equation}
The uniqueness of representation~(\ref{eq3.15}) implies that
$\Phi^{\circ}$ is correctly  defined and, moreover,
from~(\ref{eq3.16}) it follows that $\Phi^{\circ}|_{A}=\Phi$.  As in
the second part of the proof of Lemma~\ref{lem2.9} we can show that
$\Phi^{\circ}$ is isotone and subadditive. Moreover it is easy to
see that the set $\textbf{S}(\bar{x})=\textbf{S}(\bar{x},B)$,
defined as in the proof of Lemma~\ref{lem2.9}, is nonempty.
Consequently, the function $\Psi:\mathbb{R}^n_+\to\mathbb{R}_+$,
$$
\Psi(\bar{x})=\inf\{\sum\limits_{i=1}^m\bar{x}^i:(\bar{x}^1,...,\bar{x}^m)\in
\bar{\textbf{S}}(\bar{x},B)\},
$$
(cf.~(\ref{eq2.20})) is an isotone subadditive continuation of
$\Phi^{\circ}$. Thus~(\ref{eq3.14}) holds for all
$(x_1,...,x_n),(y_1,...,y_n)\in P$. Now to prove $(i)$ it is
sufficient  to show that $\Psi$ is amenable. Let us do it.

Equality~(\ref{eq3.16}) and $\Psi|_{A}=\Phi^{\circ}|_{A}=\Phi$ imply
that $\Psi(\bar{0})=0$. Let $\bar{x}=(x_1,...,x_n)$ be a point of
$\mathbb{R}^n_+$ such that $\Psi(\bar{x})=0$. We shall show that
$x_j=\pr_j(\bar{x})=0$ for every $j=1,...,n$. Suppose there is
$j_0\in J_0$ such that $x_{j_0}>0$. Since $\bar{x}\geqslant
\pr_{j_0}(\bar{x})\bar{e}^{j_0}$ and $\Psi$ is isotone, we have
$$
\Psi(\bar{x})\geqslant
\Psi(\pr_{j_0}(\bar{x})\bar{e}^{j_0})=\Phi^{\circ}(\pr_{j_0}(\bar{x})\bar{e}^{j_0})=c>0.
$$
Hence if $\pr_{j_0}(\bar{x})>0$, then $j_0\in \{1,...,n\}\backslash
J_0$. The membership $j_0\in\{1,...,n\}\backslash J_0$ and the
definition of $J_0$ imply that $\card X_{j_0}\geqslant 2$.
Consequently there are $z_{j_0}$, $y_{j_0}\in X_{j_0}$ such that
$d_{X_{j_0}}(z_{j_0},y_{j_0})>0$. Hence there is $m \in \mathbb{N}$
for which $d_{X_{j_0}}(z_{j_0},y_{j_0})\leqslant
m\pr_{j_0}(\bar{x})$ so that
\begin{equation}\label{eq3.17}
    d_{X_{j_0}}(z_{j_0},y_{j_0})\bar{e}^{j_0}\leqslant
    m\pr_{j_0}(\bar{x})\bar{e}^{j_0}.
\end{equation}
It is easy to see that $ d_{X_{j_0}}(z_{j_0},y_{j_0})\bar{e}^{j_0}
\in A$. Since $\Psi$ is isotone and subadditive,
inequality~(\ref{eq3.17}) implies
\begin{equation*}
\begin{split}
\Phi( d_{X_{j_0}}(z_{j_0},y_{j_0})&\bar{e}^{j_0})=\Psi(
d_{X_{j_0}}(z_{j_0},y_{j_0})\bar{e}^{j_0})\leqslant \Psi(m
\pr_{j_0}(\bar{x})\bar{e}^{j_0}) \\ &\leqslant m
\Psi(\pr_{j_0}(\bar{x})\bar{e}^{j_0})\leqslant m \Psi(\bar{x})=0.
\end{split}
\end{equation*}
Hence $\Phi$ is not amenable, contrary to the definition. Thus if
$\Psi(\bar{x})=0$ and $\pr_j(\bar{x})>0$, then $j\notin J_0$ and
$j\notin \{1,...,n\}\backslash J_0$, i.e., the implication
$\Psi(\bar{x}=0)\Rightarrow (\bar{x}=0)$ holds. The function $\Psi$
is amenable, as required.
\end{proof}
Theorems ~\ref{th3.9} and  ~\ref{th3.8} imply the following
\begin{corollary}\label{cor3.10}
Let $(X_1,d_{X_1}),...,(X_n,d_{X_n})$ be nonempty metric spaces. If
$d$ is an isotone subadditive metric product on $P$, then the
identical mapping
$$
\id (x_1,...,x_n) =(x_1,...,x_n), \quad (x_1,...,x_n) \in P,
$$
is bornologous as a mapping from $(P,\rho_{\infty})$ to $(P,d)$ and
uniformly continuous as a mapping from $(P,d)$ to
$(P,\rho_{\infty})$.
\end{corollary}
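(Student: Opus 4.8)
The plan is to chain Theorem~\ref{th3.9} with Theorem~\ref{th3.8}, using Theorem~\ref{th3.1} as the bridge between them. First I would apply Theorem~\ref{th3.9}: by hypothesis $d$ is an isotone subadditive metric product on $P$, which is condition (ii) of that theorem, so condition (i) holds as well. Hence there is an isotone metric preserving function $\Psi:\mathbb{R}^n_+\to\mathbb{R}_+$ with
$$
d((x_1,\dots,x_n),(y_1,\dots,y_n))=\Psi(d_{X_1}(x_1,y_1),\dots,d_{X_n}(x_n,y_n))
$$
for all $(x_1,\dots,x_n),(y_1,\dots,y_n)\in P$.

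Next I would upgrade the phrase ``metric preserving'' to ``amenable''. By Theorem~\ref{th3.1} an isotone metric preserving function of $n$ variables is subadditive and amenable; in particular the $\Psi$ produced above is an isotone amenable function on all of $\mathbb{R}^n_+$ that reproduces $d$ via the factor distances. Thus $\Psi$ witnesses that $d$ is an isotone metric product possessing an isotone amenable continuation, i.e., condition (i) of Theorem~\ref{th3.8} is satisfied (the remaining hypotheses of that theorem, namely that the $X_i$ are metric spaces, are part of our assumptions).

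Finally I would invoke the implication (i)$\Rightarrow$(ii) of Theorem~\ref{th3.8} to conclude that $d$ is distance-increasing and that $\id$ is bornologous as a mapping from $(P,\rho_{\infty})$ to $(P,d)$ and uniformly continuous as a mapping from $(P,d)$ to $(P,\rho_{\infty})$. The last two statements are exactly the assertion of the corollary, so this completes the argument. There is no real obstacle beyond recognizing that Theorem~\ref{th3.1} is the piece needed to convert the metric preserving function handed over by Theorem~\ref{th3.9} into the isotone amenable continuation demanded by Theorem~\ref{th3.8}; everything else is a direct transfer of hypotheses and conclusions between the two theorems.
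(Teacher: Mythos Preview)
Your argument is correct and matches the paper's own approach, which simply states that the corollary follows from Theorems~\ref{th3.9} and~\ref{th3.8}. Your explicit use of Theorem~\ref{th3.1} to pass from ``isotone metric preserving'' to ``isotone amenable'' is exactly the bridge needed to feed the output of Theorem~\ref{th3.9} into condition~(i) of Theorem~\ref{th3.8}.
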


\section{Modulus of continuity of bornologous \mbox{functions}}
We shall say that a function $f:\mathbb{R}_+^n\to \mathbb{R}_+$ is
nonconstant with respect to the variable $x_1$ if there exist
nonnegative numbers $a_i, i=2,...,n$, such that the function
$f(x_1,a_2,\dots,a_n)$ is nonconstant as a function of one variable
$x_1$. Analogously we define the functions
$f:\mathbb{R}^n_+\to\mathbb{R}_+$ which are nonconstant w.r.t. the
variable $x_i$ for $2\leqslant i \leqslant n$.

Let us denote by $\mathcal{W}$ the family all bornologous functions
$g:\mathbb{R}^n_+\to\mathbb{R}_+$ which are nonconstant w.r.t. every
variable.

It is easy to prove that every uniformly continuous function
$g:\mathbb{R}_+^n\to \mathbb{R}_+$ which is nonconstant w.r.t. all
variables belongs to $\mathcal{W}$ (cf. Example~\ref{ex3.6}).

For $g\in \mathcal{W}$ we define the ``\emph{modulus of
continuity}'' as follows
\begin{equation}\label{eq4.1}
\omega(g,\overline{\varepsilon}):=\sup\limits_{\substack{|\overline{x}-\overline{y}|\leqslant\overline{\varepsilon}
\\  \overline{x},\overline{y}\in
\mathbb{R}_+^n}}|g(\overline{x})-g(\overline{y})|
\end{equation}
where
$\bar{\varepsilon}=(\varepsilon_1,...,\varepsilon_n)\in\mathbb{R}^n_+$
and the inequality $|\bar{x}-\bar{y}|\leqslant\bar{\varepsilon}$ is
understood in accordance with~(\ref{eq2.1}).
\begin{remark}\label{rem4.1}
For $\bar{\varepsilon}=\bar{0}$ formula~(\ref{eq4.1}) gives us the
equality $\omega(\bar{g},\bar{0})=0$.
\end{remark}

It was noted in~\cite{BD2} that the inequality
$$
    |f(a)-f(b)|\leqslant f(|a-b|)
$$
holds for every metric preserving function
$f:\mathbb{R}_+\to\mathbb{R}_+$ and all $a,b \in \mathbb{R}_+$. In
the following lemma we give a multivariate version of this
inequality for isotone metric preserving functions of several
variables.
\begin{lemma}\label{lem4.2}
Let $F\in \mathfrak{F}^n_i$. The inequality
\begin{equation}\label{eq4.2}
|F(\bar{x})-F(\bar{y}))|\leqslant F(|\bar{x}-\bar{y}|)
\end{equation}
holds for all $\bar{x}, \bar{y} \in \mathbb{R}^n_+$ with
$|\bar{x}-\bar{y}|=(|x_1-y_1|,...,|x_n-y_n)|)$.
\end{lemma}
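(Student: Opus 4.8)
The plan is to reduce everything to the structural description of $\mathfrak{F}^n_i$ furnished by Theorem~\ref{th3.1}: every $F\in\mathfrak{F}^n_i$ is isotone, subadditive and amenable. In fact only isotonicity and subadditivity will be used. First I would fix $\bar{x},\bar{y}\in\mathbb{R}^n_+$ and, since~(\ref{eq4.2}) is symmetric in $\bar{x},\bar{y}$ (because $|\bar{x}-\bar{y}|=|\bar{y}-\bar{x}|$), assume without loss of generality that $F(\bar{x})\geqslant F(\bar{y})$. Then~(\ref{eq4.2}) is equivalent to the inequality $F(\bar{x})\leqslant F(\bar{y})+F(|\bar{x}-\bar{y}|)$, and it suffices to establish this.

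The key elementary observation is the coordinatewise inequality $\bar{x}\leqslant\bar{y}+|\bar{x}-\bar{y}|$ in the ordered set $(\mathbb{R}^n_+,\leqslant)$; this holds because $x_i\leqslant y_i+|x_i-y_i|$ for every $i\in\{1,\dots,n\}$ (it is an equality when $x_i\geqslant y_i$, and it reads $x_i\leqslant 2y_i-x_i$ otherwise). Applying the isotonicity of $F$ to this inequality gives $F(\bar{x})\leqslant F(\bar{y}+|\bar{x}-\bar{y}|)$, and then the subadditivity of $F$ in the usual form~(\ref{eq2.17}) --- which is available here by Remark~\ref{rem2.8}, since $F$ is isotone and defined on all of $\mathbb{R}^n_+$ --- yields $F(\bar{y}+|\bar{x}-\bar{y}|)\leqslant F(\bar{y})+F(|\bar{x}-\bar{y}|)$. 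Combining the two estimates gives the desired $F(\bar{x})-F(\bar{y})\leqslant F(|\bar{x}-\bar{y}|)$, and by the symmetry noted above the same bound holds for $F(\bar{y})-F(\bar{x})$, which proves~(\ref{eq4.2}).

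I do not expect any real obstacle: once the coordinatewise inequality $\bar{x}\leqslant\bar{y}+|\bar{x}-\bar{y}|$ is noticed, the argument is a two-line computation. The only point requiring a small amount of care is invoking the correct version of subadditivity, i.e.\ citing Remark~\ref{rem2.8} to pass from Definition~\ref{def2.7} to the classical inequality~(\ref{eq2.17}) for the fully defined isotone function $F$.
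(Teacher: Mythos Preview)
Your proposal is correct and follows essentially the same approach as the paper: assume without loss of generality that $F(\bar{x})\geqslant F(\bar{y})$, use the coordinatewise inequality $\bar{x}\leqslant\bar{y}+|\bar{x}-\bar{y}|$, and then apply isotonicity followed by subadditivity (both furnished by Theorem~\ref{th3.1}) to obtain~(\ref{eq4.2}). Your version is in fact slightly more detailed, since you justify the elementary inequality $x_i\leqslant y_i+|x_i-y_i|$ and explicitly invoke Remark~\ref{rem2.8} for the classical form of subadditivity.
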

\begin{proof}
Let $\bar{x}, \bar{y} \in \mathbb{R}^n_+$. We may assume, without
loss of generality, that
\begin{equation}\label{eq4.3}
F(\bar{x})\geqslant F(\bar{y}).
\end{equation}
It is easy to see that
\begin{equation}\label{eq4.4}
\bar{x}\leqslant \bar{y}+|\bar{x}-\bar{y}|.
\end{equation}
Since $F$ is isotone and subadditive (see Theorem~\ref{th3.1}),
inequalities~(\ref{eq4.3}) and ~(\ref{eq4.4}) imply  ~(\ref{eq4.2}),
$$
|F(\bar{x})-F(\bar{y})|=F(\bar{x})-F(\bar{y})\leqslant
F(\bar{y}+|\bar{x}-\bar{y}|)-F(\bar{y})
$$
$$
\leqslant
F(\bar{y})+F(|\bar{x}-\bar{y}|)-F(\bar{y})=F(|\bar{x}-\bar{y}|).
$$
\end{proof}
\begin{theorem}\label{th4.3}
Let $F$ be an arbitrary nonnegative function defined on
$\mathbb{R}_+^n$. Then $F$ belongs to $\mathfrak{F}_i^n$ if and only
if there exists $g\in \mathcal{W}$ such that
\begin{equation}\label{eq4.5}
F(\overline{\varepsilon})=\omega(g,\overline{\varepsilon})
\end{equation}
for every $\overline{\varepsilon} \in \mathbb{R}_+^n$.
\end{theorem}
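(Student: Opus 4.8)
The plan is to deduce both directions from Theorem~\ref{th3.1}, which says that an isotone function belongs to $\mathfrak{F}^n_i$ exactly when it is subadditive and amenable. For the necessity I would take $g:=F$ itself as the witness in $\mathcal{W}$ and verify $\omega(F,\cdot)=F$; for the sufficiency I would take an arbitrary $g\in\mathcal{W}$, set $F:=\omega(g,\cdot)$, and check that $F$ is a real-valued isotone subadditive amenable function.

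For the necessity direction, suppose $F\in\mathfrak{F}^n_i$; then $F$ is isotone, and by Theorem~\ref{th3.1} it is subadditive and amenable. First I would show $\omega(F,\bar{\varepsilon})=F(\bar{\varepsilon})$: the inequality ``$\leqslant$'' follows from Lemma~\ref{lem4.2} together with isotonicity, since $|\bar{x}-\bar{y}|\leqslant\bar{\varepsilon}$ gives $|F(\bar{x})-F(\bar{y})|\leqslant F(|\bar{x}-\bar{y}|)\leqslant F(\bar{\varepsilon})$; the inequality ``$\geqslant$'' follows by taking $\bar{x}=\bar{\varepsilon}$, $\bar{y}=\bar{0}$ and using $F(\bar{0})=0$. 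Then I would check $F\in\mathcal{W}$: setting $G(t):=F(t,\dots,t)$, which is increasing as a diagonal restriction of the isotone $F$, Lemma~\ref{lem4.2} yields $|F(\bar{x})-F(\bar{y})|\leqslant G(\rho_{\infty}(\bar{x},\bar{y}))$, so by Remark~\ref{rem3.4} $F$ is bornologous (the class of bornologous maps being the same for all bi-Lipschitz equivalent standard metrics on $\mathbb{R}^n_+$), which in particular re-proves that $\omega(F,\cdot)$ is finite; and $F$ is nonconstant with respect to the $j$-th variable because $t\mapsto F(\bar{t}^j)$ vanishes at $0$ and is strictly positive for $t>0$ since $F$ is amenable and $\bar{t}^j>\bar{0}$.

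For the sufficiency direction, let $g\in\mathcal{W}$ and $F:=\omega(g,\cdot)$. Isotonicity is immediate from monotonicity of the supremum over the nested sets $\{(\bar{x},\bar{y}):|\bar{x}-\bar{y}|\leqslant\bar{\varepsilon}\}$, and finiteness of $F$ follows from $g$ being bornologous. For amenability I would use Remark~\ref{rem4.1} for $F(\bar{0})=0$, and for $\bar{\varepsilon}>\bar{0}$ I would pick a coordinate $j_0$ with $\varepsilon_{j_0}>0$, use that $g$ is nonconstant in $x_{j_0}$ to fix $a_i\geqslant0$ ($i\neq j_0$) with the slice $h(t):=g(a_1,\dots,a_{j_0-1},t,a_{j_0+1},\dots,a_n)$ nonconstant, and then subdivide an interval on which $h$ takes two distinct values into finitely many pieces of length $\leqslant\varepsilon_{j_0}$, obtaining $s,t$ with $|s-t|\leqslant\varepsilon_{j_0}$ and $h(s)\neq h(t)$; the corresponding points of $\mathbb{R}^n_+$, which differ only in the $j_0$-th coordinate, satisfy $|\bar{x}-\bar{y}|\leqslant\bar{\varepsilon}$, so $F(\bar{\varepsilon})\geqslant|h(s)-h(t)|>0$. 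For subadditivity I would, given $\bar{x},\bar{y}$ with $|\bar{x}-\bar{y}|\leqslant\bar{\varepsilon}+\bar{\delta}$, build coordinatewise a point $\bar{z}\in\mathbb{R}^n_+$ whose $i$-th coordinate lies on the segment between $x_i$ and $y_i$ at distance $\min(\varepsilon_i,|x_i-y_i|)$ from $x_i$, so that $|\bar{x}-\bar{z}|\leqslant\bar{\varepsilon}$ and $|\bar{z}-\bar{y}|\leqslant\bar{\delta}$; then $|g(\bar{x})-g(\bar{y})|\leqslant|g(\bar{x})-g(\bar{z})|+|g(\bar{z})-g(\bar{y})|\leqslant F(\bar{\varepsilon})+F(\bar{\delta})$, and taking the supremum over admissible $\bar{x},\bar{y}$ gives $F(\bar{\varepsilon}+\bar{\delta})\leqslant F(\bar{\varepsilon})+F(\bar{\delta})$. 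Finally Theorem~\ref{th3.1} yields $F\in\mathfrak{F}^n_i$.

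The only steps that are not pure bookkeeping are two: the coordinatewise interpolation producing the intermediate point $\bar{z}$ for subadditivity (and checking it stays in $\mathbb{R}^n_+$), and the passage from ``$g$ is nonconstant in some variable'' to ``$g$ already varies at scale $\leqslant\varepsilon_{j_0}$'' via a finite subdivision, which is what makes $\omega(g,\bar{\varepsilon})$ strictly positive for $\bar{\varepsilon}>\bar{0}$; I expect the latter to be the main point requiring care. The pleasant surprise is that in the necessity direction one may take $g=F$ itself.
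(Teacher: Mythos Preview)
Your proposal is correct and follows essentially the same route as the paper's own proof: both take $g:=F$ for the necessity direction and verify $\omega(F,\cdot)=F$ via Lemma~\ref{lem4.2}, and both establish sufficiency by checking isotonicity, amenability, and subadditivity so as to invoke Theorem~\ref{th3.1}. Your explicit coordinatewise construction of the intermediate point $\bar{z}$ is exactly the ``simple geometrical reasoning'' the paper leaves unstated, and your subdivision argument for amenability is the contrapositive of the paper's observation that $\omega(g,\bar{\varepsilon})=0$ would force the slice $g^*$ to be constant.
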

\begin{proof}
Suppose $F\in \mathfrak{F}_i^n$. Let us prove that there exists
$g\in \mathcal{W}$ such that equality~(\ref{eq4.5}) holds for every
$\overline{\varepsilon} \in \mathbb{R}_+^n$.
 It suffices to show that
\begin{equation}\label{eq4.6}
F \in \mathcal{W}
\end{equation}
 and
\begin{equation}\label{eq4.7}
F(\overline{\varepsilon})=\omega(F,\overline{\varepsilon}).
\end{equation}
 Let us verify that $F$ is nonconstant w.r.t. each variable. Indeed, since $F$ is amenable we have
 $$
 0=F(0,...,0)\neq F(t,0,...,0)\mbox{ if } t>0.
 $$
 Thus, $F$ is nonconstant w.r.t. the first variable.
 Similarly we can establish that $F$ is nonconstant w.r.t. other variables.
 Thus, to prove~(\ref{eq4.6}) it remains to verify that for every
 $\varepsilon>0$ there exists $\delta(\varepsilon)>0$ such that
 for every $\overline{x}=(x_1,...,x_n)$, $\overline{y}=(y_1,...,y_n)\in \mathbb{R}_+^n$
 the inequality
\begin{equation}\label{eq4.8}
\sum\limits_{i=1}^n|x_i-y_i|<\varepsilon
\end{equation}
implies the inequality
\begin{equation}\label{eq4.9}
 |F(\overline{x})-F(\overline{y})|\leqslant \delta(\varepsilon).
\end{equation}
Suppose that inequality~(\ref{eq4.8}) holds. By Lemma~\ref{lem4.2}
we obtain
$$
|F(\bar{x})-F(\bar{y})|\leqslant F(|\bar{x}-\bar{y}|).
$$
Inequality~(\ref{eq4.8}) implies that
$|\bar{x}-\bar{y}|\leqslant\bar{\varepsilon}$ where
$\bar{\varepsilon}=(\varepsilon,...,\varepsilon) \in
\mathbb{R}^n_+$. Consequently we obtain
$F(|\bar{x}-\bar{y}|)\leqslant F(\bar{\varepsilon})$. Thus
inequality~(\ref{eq4.9}) holds with
$\delta(\varepsilon)=F(\bar{\varepsilon})$. It still remains to note
that equality~(\ref{eq4.7}) follows directly from the definition of
modulus of continuity and Lemma~\ref{lem4.2}.

Suppose $g \in \mathcal{W}$ and  equality~(\ref{eq4.5}) holds for
every $\bar{\varepsilon} \in \mathbb{R}_+^n$. It is necessary to
show that $F\in \mathfrak{F}_i^n$. Note that monotonicity and
nonnegativity $F$ follow directly from~(\ref{eq4.1}). Let us show
that $F(\overline{\varepsilon})>0$ if $\overline{\varepsilon}>
\overline{0}$. By Remark~\ref{rem4.1} we have $F(\bar{0})=0$

The inequality $\bar{\varepsilon}>\bar{0}$ implies that there exists
$i_0\in \{1,...,n\}$ such that $\varepsilon_{i_0}>0$. For
convenience we may consider the case when $i_{0}=1$.
In~(\ref{eq4.5}) the function $g$ belongs to $\mathcal{W}$.
Consequently $g$ is nonconstant w.r.t. all variables. Hence there
are some numbers $a_i\in \mathbb{R}_+$, $i=2,...,n$, such that the
function
$$
g^*(t):=g(t,a_2,...,a_n), \quad t \in \mathbb{R}_+,
$$
is nonconstant. Suppose $F(\varepsilon)=0$. Then we obtain
$g^*(t_1)=g^*(t_2)$ whenever $|t_2-t_1|\leqslant \varepsilon_1$.
Consequently $g^*(t) = \mathrm{constant}$. This contradiction
implies the desired inequality $F(\bar{\varepsilon})>0$ for
$\bar{\varepsilon}>\bar{0}$.

It remains to verify that
\begin{equation}\label{eq4.10}
F(\overline{x}+\overline{y})\leqslant
F(\overline{x})+F(\overline{y})
\end{equation}
for $\bar{x},\bar{y}\in \mathbb{R}_n^+$. To prove~(\ref{eq4.10})
consider  $\bar{u},\bar{v}\in\mathbb{R}_+^n$ for which
$|\overline{u}-\overline{v}|\leqslant \overline{x}+\overline{y}$.
The simple geometrical reasoning shows that there exists
$\overline{w}\in \mathbb{R}_+^n$ such that
$$
|\overline{u}-\overline{w}|\leqslant \overline{x} \mbox{ and }
|\overline{w}-\overline{v}|\leqslant \overline{y}.
$$
Now using~(\ref{eq4.1}) and the two inequalities given above we get
$$
|g(\overline{u})-g(\overline{v})|\leqslant
|g(\overline{u})-g(\overline{w})|+|g(\overline{w})-g(\overline{v})|\leqslant
\omega(g,\overline{x})+\omega(g,\overline{y})=F(\overline{x})+F(\overline{y}).
$$
From the other hand
\begin{equation*}\
F(\overline{x}+\overline{y})=\omega(g,\overline{x}+\overline{y})=\sup\limits_{\substack{|\overline{u}-\overline{v}|\leqslant\overline{x}+\overline{y}
\\  \overline{x},\overline{y}\in
\mathbb{R}_+^n}}|g(\overline{u})-g(\overline{v})|,
\end{equation*}
which proves~(\ref{eq4.10}). Now the membership relation $F\in
\mathfrak{F}^n_i$ follows from Theorem~\ref{th3.1}.
\end{proof}
The first part of the proof of Theorem~\ref{th4.3} shows that
$\mathfrak{F}^n_i\subseteq \mathcal{W}$. The following selects the
metric preserving functions from the functions belonging to
$\mathcal{W}$.
\begin{corollary}\label{cor4.4}
The set $\mathfrak{F}^n_i$ coincides with the set of fixed points of
the mapping
$$
\mathcal{W}\ni g \mapsto \omega(g,\cdot) \in \mathcal{W}
$$
where $\omega(g,\cdot)$ is defined by~(\ref{eq4.1}).
\end{corollary}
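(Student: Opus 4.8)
The plan is to deduce Corollary~\ref{cor4.4} directly from Theorem~\ref{th4.3} and from the two intermediate facts recorded inside its proof, namely the inclusion $\mathfrak{F}^n_i\subseteq\mathcal{W}$ together with the identities~(\ref{eq4.6}) and~(\ref{eq4.7}). First I would observe that the displayed assignment $g\mapsto\omega(g,\cdot)$ really does send $\mathcal{W}$ into $\mathcal{W}$: for any $g\in\mathcal{W}$, the "only if" part of Theorem~\ref{th4.3} produces (with the roles reversed) the membership $\omega(g,\cdot)\in\mathfrak{F}^n_i$, and since the first part of the proof of Theorem~\ref{th4.3} shows $\mathfrak{F}^n_i\subseteq\mathcal{W}$, we conclude $\omega(g,\cdot)\in\mathcal{W}$. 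Thus the mapping in the statement is well defined and it suffices to identify its fixed point set with $\mathfrak{F}^n_i$.

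For the inclusion $\mathfrak{F}^n_i\subseteq\{\text{fixed points}\}$, I would take $F\in\mathfrak{F}^n_i$ and simply quote the computations in the proof of Theorem~\ref{th4.3}: that proof establishes $F\in\mathcal{W}$ (this is~(\ref{eq4.6})) and $F(\bar{\varepsilon})=\omega(F,\bar{\varepsilon})$ for every $\bar{\varepsilon}\in\mathbb{R}^n_+$ (this is~(\ref{eq4.7})). The second equality is exactly the statement that $F$ is a fixed point of $g\mapsto\omega(g,\cdot)$ in $\mathcal{W}$.

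For the reverse inclusion, I would let $g\in\mathcal{W}$ be a fixed point, i.e.\ $g(\bar{\varepsilon})=\omega(g,\bar{\varepsilon})$ for all $\bar{\varepsilon}\in\mathbb{R}^n_+$. Then $g$ has the form $\omega(h,\cdot)$ with $h=g\in\mathcal{W}$, so the "if" direction of Theorem~\ref{th4.3}, applied with $F=g$, gives $g\in\mathfrak{F}^n_i$. Putting the two inclusions together yields the claimed coincidence of sets.

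I do not expect a genuine obstacle here, since the corollary is essentially a restatement of Theorem~\ref{th4.3}; the only point deserving a line of care is the well-definedness of the mapping $\mathcal{W}\to\mathcal{W}$, which is handled at once by invoking $\mathfrak{F}^n_i\subseteq\mathcal{W}$ as above.
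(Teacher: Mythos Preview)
Your argument is correct and matches the paper's approach: the corollary is left without a separate proof there, being an immediate repackaging of Theorem~\ref{th4.3} together with the inclusion $\mathfrak{F}^n_i\subseteq\mathcal{W}$ noted just before the statement. One small slip: in your well-definedness step you invoke the ``only if'' part of Theorem~\ref{th4.3}, but it is the ``if'' direction (existence of $g\in\mathcal{W}$ with $F=\omega(g,\cdot)$ implies $F\in\mathfrak{F}^n_i$) that yields $\omega(g,\cdot)\in\mathfrak{F}^n_i$---you use it correctly later, so just fix the label.
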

\section{Some $3l$-universal subspaces of $\mathbb{R}$}

We shall say that a metric space $(X,d)$ is $3l$-universal (linear
three universal) if and only if every three-point metric subspace of
the real line is isometrically embeddable in $(X,d)$, i.e., for all
$a,b \in \mathbb{R}_+$ there are $x_1, x_2, x_3 \in X$ such that
\begin{equation}\label{eq5.1}
d(x_1,x_2) = a, \quad d(x_2,x_3) = b,\quad d(x_1,x_3)=a+b.
\end{equation}

The following theorem gives a set of sufficient and necessary
conditions under which an isotone function
$\Phi:\mathbb{R}^n_+\to\mathbb{R}_+$ belongs to $\mathfrak{F}^n_i$.
\begin{theorem}\label{th5.1}
Let $\Phi:\mathbb{R}^n_+\to\mathbb{R}_+$ be an isotone function. The
following conditions are equivalent.
\begin{itemize}
  \item [(i)] The function
  $\Phi(d_{X_1}(\cdot,\cdot),...,d_{X_n}(\cdot,\cdot))$ is a metric
  on $P=X_1\times\cdots\times X_n$ for all metric spaces
  $(X_1,d_{X_1}),...,(X_n,d_{X_n})$.
  \item [(ii)] The function   $\Phi(d_{X_1}(\cdot,\cdot),...,d_{X_n}(\cdot,\cdot))$
  is a metric on $P=\mathbb{R}^n$ with \mbox{$X_1=\cdots=X_n=\mathbb{R}$},
  $$
d_{X_1}(x_1,y_1)=|x_1-y_1|,...,d_{X_n}(x_n,y_n)=|x_n-y_n|.
  $$
  \item [(iii)] The function
  $\Phi(d_{X_1}(\cdot,\cdot),...,d_{X_n}(\cdot,\cdot))$ is a metric
  on \mbox{$P=X_1\times\cdots\times X_n$} for all three-point subspaces
  $X_1,...,X_n$ of $\mathbb{R}$ with the usual metric
  $d(x,y)=|x-y|$.
  \item [(iv)] There are $3l$-universal metric spaces
  $(X_1,d_{X_1}),...,(X_n,d_{X_n})$ such that
  $\Phi(d_{X_1}(\cdot,\cdot),...,d_{X_n}(\cdot,\cdot))$ is a metric
  on $P=X_1\times\cdots\times X_n$.
\end{itemize}
\end{theorem}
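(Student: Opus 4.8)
Since $\Phi$ is assumed to be isotone, condition~(i) is by definition the assertion $\Phi\in\mathfrak{F}^n_i$, so by Theorem~\ref{th3.1} it is equivalent to saying that $\Phi$ is amenable and subadditive. The plan is therefore to prove the chain (i)$\Rightarrow$(ii)$\Rightarrow$(iii)$\Rightarrow$(i) together with the two implications (i)$\Rightarrow$(iv)$\Rightarrow$(i), where the implications (iii)$\Rightarrow$(i) and (iv)$\Rightarrow$(i) are obtained by deducing amenability and subadditivity of $\Phi$ from the respective geometric hypothesis and then quoting Theorem~\ref{th3.1}.

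The implication (i)$\Rightarrow$(ii) is just the specialization $X_1=\dots=X_n=\mathbb{R}$ with the usual metric. For (i)$\Rightarrow$(iv) one observes in addition that $(\mathbb{R},|\cdot-\cdot|)$ is $3l$-universal, since for $a,b\in\mathbb{R}_+$ the points $0$, $a$, $a+b$ satisfy~(\ref{eq5.1}); thus $X_1=\dots=X_n=\mathbb{R}$ witnesses~(iv). For (ii)$\Rightarrow$(iii): if each $X_i\subseteq\mathbb{R}$ carries the induced metric, then $X_1\times\cdots\times X_n\subseteq\mathbb{R}^n$ and the function $\Phi(d_{X_1}(\cdot,\cdot),\dots,d_{X_n}(\cdot,\cdot))$ on the product is exactly the restriction to this subset of the metric from~(ii) on $\mathbb{R}^n$, and a restriction of a metric to a subset is again a metric.

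For (iii)$\Rightarrow$(i) and (iv)$\Rightarrow$(i) I would argue in parallel, the key being that in each case one can realize, coordinatewise, prescribed configurations of points at prescribed mutual distances. Concretely: (a) for every $\bar{c}\in\mathbb{R}^n_+$ there are spaces $X_i$ and points $p^i,q^i\in X_i$ with $d_{X_i}(p^i,q^i)=c_i$; and (b) for every $\bar{a},\bar{b}\in\mathbb{R}^n_+$ there are spaces $X_i$ and points $x^i_1,x^i_2,x^i_3\in X_i$ with $d_{X_i}(x^i_1,x^i_2)=a_i$, $d_{X_i}(x^i_2,x^i_3)=b_i$, $d_{X_i}(x^i_1,x^i_3)=a_i+b_i$. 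In case~(iv) both statements are immediate from $3l$-universality of the $X_i$ (for~(a) take the second side length equal to $0$). In case~(iii) one uses for~(a) the three-point subspace $X_i=\{0,c_i,2c_i\}$ of $\mathbb{R}$ with $p^i=0$, $q^i=c_i$ when $c_i>0$, and $X_i=\{0,1,2\}$ with $p^i=q^i=0$ when $c_i=0$; for~(b) one uses any three-point subset $X_i$ of $\mathbb{R}$ containing $\{0,a_i,a_i+b_i\}$, padded with extra points if that set has fewer than three elements, and $x^i_1=0$, $x^i_2=a_i$, $x^i_3=a_i+b_i$.

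Writing $D=\Phi(d_{X_1}(\cdot,\cdot),\dots,d_{X_n}(\cdot,\cdot))$ for the product metric, amenability now follows at once: $\Phi(\bar{0})=D(p,p)=0$, and if $\bar{x}>\bar{0}$, fixing $i_0$ with $x_{i_0}>0$ and applying~(a) with $\bar{c}=\bar{x}$, the product points $p=(p^1,\dots,p^n)$ and $q=(q^1,\dots,q^n)$ differ in coordinate $i_0$, so $\Phi(\bar{x})=D(p,q)>0$. Subadditivity follows from~(b): putting $p_k=(x^1_k,\dots,x^n_k)$ for $k=1,2,3$ we get $D(p_1,p_2)=\Phi(\bar{a})$, $D(p_2,p_3)=\Phi(\bar{b})$, $D(p_1,p_3)=\Phi(\bar{a}+\bar{b})$, so the triangle inequality gives $\Phi(\bar{a}+\bar{b})\leqslant\Phi(\bar{a})+\Phi(\bar{b})$ for all $\bar{a},\bar{b}\in\mathbb{R}^n_+$. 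Being isotone, amenable and subadditive, $\Phi$ lies in $\mathfrak{F}^n_i$ by Theorem~\ref{th3.1}, which is condition~(i). The only genuine technical nuisance — and the single point where~(iii) is slightly more delicate than~(iv) — is the padding needed to keep each factor exactly three-point when some of the numbers $a_i$, $b_i$, $c_i$ vanish or coincide; apart from that the argument uses nothing but the metric axioms.
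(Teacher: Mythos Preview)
Your proof is correct and follows essentially the same approach as the paper's sketch: reduce each of (ii), (iii), (iv) to amenability and subadditivity of $\Phi$ via coordinatewise realization of the triples $(a_i,b_i,a_i+b_i)$, and then invoke Theorem~\ref{th3.1}. Your version is in fact more complete than the paper's sketch, since you explicitly verify amenability (which the paper leaves implicit) and you spell out the padding needed in~(iii) when some of the $a_i,b_i$ vanish; the only structural difference is that you route (i)$\Rightarrow$(iii) through (ii) via restriction, whereas the paper treats all three forward implications as evident.
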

\begin{proof}[\textbf{Sketch of the proof.}] Condition (i) is a
reformulation of the definition of the set $\mathfrak{F}^n_i$. The
implications (i)$\Rightarrow$(ii), (i)$\Rightarrow$(iii),
(i)$\Rightarrow$(iv) are evident. Each condition from (ii), (iii),
(iv) implies that for every $(a_1,...,a_n)$, $(b_1,...,b_n)\in
\mathbb{R}^n_+$ there are triangles $\{x_i,y_i,z_i\}\subseteq X_i$
such that
$$
d_{X_i}(x_i,y_i)=a_i, \quad d_{X_i}(y_i,z_i)=b_i, \quad
d_{X_i}(x_i,z_i)=a_i+b_i, \quad i=1,...,n.
$$
Since $\Phi(d_{X_1}(\cdot,\cdot),...,d_{X_n}(\cdot,\cdot))$ is a
metric  on $P$, the triangle inequality implies
$$
\Phi(a_1+b_1,...,a_n+b_n) \leqslant
\Phi(a_1,...,a_n)+\Phi(b_1,...,b_n).
$$
Hence $\Phi$ is subadditive. The implications (ii)$\Rightarrow$(i),
(iii)$\Rightarrow$(i), (iv)$\Rightarrow$(i) follow from
Theorem~\ref{th3.1}.
\end{proof}

\begin{remark}\label{rem5.2} Theorem~\ref{th5.1} has a natural
analog for arbitrary metric preserving functions
$\Phi:\mathbb{R}^n_+\to\mathbb{R}_+$. For example, Theorem 2.1 in
~\cite{HM} or Lemma 1 in~\cite{BFS} or Lemma 7 in Section 5 of
~\cite{FS} are similar to the equivalence (i)$\Leftrightarrow$(iii)
from Theorem~\ref{th5.1}.
\end{remark}

Let us consider some  $3l$-universal metric subspaces of
$\mathbb{R}=(-\infty,\infty)$.

The set $\mathbb{R}$ with the usual addition and the multiplication
on rational numbers forms a vector space over the field
$\mathbb{Q}$. If $X$ is a linear subspace of $\mathbb{R}$ and
$B\subseteq X$, then $B$ is called a \emph{Hamel basis} of $X$ if
and only if every nonzero $x\in X$ can be uniquely  represented as a
finite linear combination
$$
x=\sum\limits_{i=1}^nr_ib_i
$$
where $b_1,...,b_n$ are distinct elements of $B$ and
$r_1,...,r_n\in\mathbb{Q}\backslash\{0\}$. It is well known and easy
to prove by transfinite induction that such basis exists for every
linear subspace of $\mathbb{R}$. In particular $\varnothing$ is a
Hamel basis for $X=\{0\}$.
\begin{proposition}\label{prop5.3}
Let $X$ be a linear subspace of $\mathbb{R}$. If
$\mathbb{R}\backslash X\neq\varnothing$, then the set
$\mathbb{R}\backslash X$ is $3l$-universal.
\end{proposition}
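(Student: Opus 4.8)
The plan is to reduce $3l$-universality of $\mathbb{R}\backslash X$ to an elementary coset-avoidance statement. By definition I must show that for every $a,b\in\mathbb{R}_+$ there exist $x_1,x_2,x_3\in\mathbb{R}\backslash X$ with $|x_1-x_2|=a$, $|x_2-x_3|=b$, $|x_1-x_3|=a+b$. The obvious candidate is the collinear triple
$$
x_1=t,\qquad x_2=t+a,\qquad x_3=t+a+b
$$
for a real parameter $t$ still to be chosen; with this choice the three distance equalities in \eqref{eq5.1} hold automatically (all differences are $-a$, $-b$, $-(a+b)$), so the whole problem becomes: pick $t$ so that each of $t$, $t+a$, $t+a+b$ lies outside $X$.

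First I would describe the set of ``bad'' values of $t$. The only obstructions are $t\in X$, or $t+a\in X$ (equivalently $t\in X-a$), or $t+a+b\in X$ (equivalently $t\in X-a-b$), so the bad set is exactly $X\cup(X-a)\cup(X-a-b)$, a union of at most three cosets of $X$ in the additive group $(\mathbb{R},+)$. Hence a suitable $t$ exists as soon as $\mathbb{R}$ is not the union of three cosets of $X$.

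Next I would verify that $X$ admits more than three cosets in $\mathbb{R}$; in fact exhibiting four pairwise distinct ones suffices. Since $\mathbb{R}\backslash X\neq\varnothing$, fix $\xi\in\mathbb{R}\backslash X$. Because $X$ is a linear subspace over $\mathbb{Q}$, for distinct integers $k,l$ the relation $(k-l)\xi\in X$ is equivalent to $\xi\in X$ and therefore fails; thus $\xi,2\xi,3\xi,4\xi$ represent four distinct cosets of $X$. By the pigeonhole principle at least one of them, say $t$, lies in none of the cosets $X$, $X-a$, $X-a-b$. Setting $x_1=t$, $x_2=t+a$, $x_3=t+a+b$ then gives three points of $\mathbb{R}\backslash X$ satisfying \eqref{eq5.1}, which proves the proposition.

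I do not expect a real obstacle here; the single point that deserves care is simply observing that $X-a$ and $X-a-b$ genuinely are cosets of $X$ (namely the cosets containing $-a$ and $-a-b$), so that the bad set is indeed a union of at most three cosets. Once this is noted, the counting argument closes everything and no explicit use of a Hamel basis is required.
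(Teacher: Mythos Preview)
Your proof is correct and follows a genuinely different route from the paper. Both arguments reduce to finding a translation $t$ such that $t$, $t+a$, $t+a+b$ all miss $X$, but the paper proceeds by extending a Hamel basis $B$ of $X$ to a Hamel basis $B_1$ of $\mathbb{R}$, picking $b_0\in B_1\setminus B$, and translating by a sufficiently large rational multiple of $b_0$ so that the $b_0$-coefficient of each translated point is forced to be nonzero. Your argument replaces this with a clean pigeonhole: the bad set is a union of at most three cosets of $X$, while the elements $\xi,2\xi,3\xi,4\xi$ (for any $\xi\notin X$) lie in four distinct cosets because $X$ is a $\mathbb{Q}$-subspace. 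Your approach is more elementary---it avoids Hamel bases and hence any appeal to the axiom of choice---and it scales immediately to the paper's later remark about embedding arbitrary finite subsets $A\subseteq\mathbb{R}$ (just take $|A|+1$ multiples of $\xi$). The paper's approach, by contrast, makes the translation parameter somewhat more explicit once a basis is in hand, but at the cost of invoking choice.
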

\begin{proof}
Let $X\neq\mathbb{R}$. It is sufficient to show that for every
$a,b\in\mathbb{R}_+$ there are $x_0,y_0,z_0\in\mathbb{R}\backslash
X$ such that
\begin{equation}\label{eq5.2}
x_0-y_0 = a \mbox{ and } y_0-z_0 = b.
\end{equation}
This is evident for $X=\{0\}$. Suppose that $X\neq\{0\}$ and that
$B$ is a Hamel basis of $X$. Let $x^*$, $y^*$ and $z^*$ be some
nonzero points of $\mathbb{R}$ such that
\begin{equation}\label{eq5.3}
x^*-y^* = a \mbox{ and } y^*-z^* = b.
\end{equation}
Since $B$ is a rationally independent set, there is a Hamel basis
$B_1$ of $\mathbb{R}$ such that $B_1\supseteq B$. The numbers $x^*,
y^*, z^*$ can be uniquely represented as
\begin{equation}\label{eq5.4}
x^*=\sum\limits_{i=1}^{n(x)}r_i(x)b_i(x),\quad
y^*=\sum\limits_{i=1}^{n(y)}r_i(y)b_i(y) \mbox{ and }
z^*=\sum\limits_{i=1}^{n(z)}r_i(z)b_i(z)
\end{equation}
with $r_i(x), r_i(y), r_i(z) \in \mathbb{Q}\backslash \{0\}$ and
$b_i(x), b_i(y), b_i(z) \in B_1$. Since $$\mathbb{R}\supseteq X\neq
\mathbb{R},$$ the set $B_1\backslash B$ is nonempty. Let $b_0\in
B_1\backslash B$. Define $x_0, y_0$ and $z_0$ as
\begin{equation}\label{eq5.5}
x_0 = x^*+(r_0+1)b_0,\quad y_0 = y^*+(r_0+1)b_0, \quad z_0 =
z^*+(r_0+1)b_0
\end{equation}
where
\begin{equation}\label{eq5.6}
r_0:=\max \left\{ \bigvee_{i=1}^{n(x)}|r_i(x)|,
\bigvee_{i=1}^{n(y)}|r_i(y)|, \bigvee_{i=1}^{n(z)}|r_i(z)| \right\}
\end{equation}
The numbers $x_0$, $y_0$, $z_0$ can be represented as finite
rational nonzero linear combinations of elements of $B_1$. The
uniqueness of such representations and equalities~(\ref{eq5.4}),
~(\ref{eq5.5}), ~(\ref{eq5.6}) imply that $x_0, y_0, z_0 \in
\mathbb{R}\backslash X$. Moreover~(\ref{eq5.2}) follows from
~(\ref{eq5.3})
 and ~(\ref{eq5.5}).
\end{proof}
\begin{corollary}\label{cor5.4}
Let $X$ be a subfield of the field $\mathbb{R}$. If
$\mathbb{R}\backslash X\neq \varnothing$, then $\mathbb{R}\backslash
X$ is $3l$-universal.
\end{corollary}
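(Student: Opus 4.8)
The plan is to reduce Corollary~\ref{cor5.4} directly to Proposition~\ref{prop5.3}. The only thing that needs to be observed is that every subfield of $\mathbb{R}$ is, in particular, a linear subspace of $\mathbb{R}$ over $\mathbb{Q}$. Indeed, if $X$ is a subfield of $\mathbb{R}$, then $1\in X$, so by closure under addition $\mathbb{Z}_{+}\subseteq X$, by closure under taking additive inverses $\mathbb{Z}\subseteq X$, and by closure under taking multiplicative inverses and multiplication, every $p/q$ with $p,q\in\mathbb{Z}$, $q\neq 0$, lies in $X$; hence $\mathbb{Q}\subseteq X$. Since $X$ is closed under addition and under multiplication by elements of $X$, it is in particular closed under addition and under multiplication by elements of $\mathbb{Q}$, i.e., $X$ is a $\mathbb{Q}$-linear subspace of $\mathbb{R}$.

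With this observation in hand, the hypothesis $\mathbb{R}\backslash X\neq\varnothing$ is exactly the hypothesis of Proposition~\ref{prop5.3}, so the set $\mathbb{R}\backslash X$ is $3l$-universal, which is the assertion of the corollary. No further work is required.

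There is essentially no obstacle here: the entire content is the elementary remark that a subfield of $\mathbb{R}$ automatically contains $\mathbb{Q}$ and is therefore a rational vector subspace. The corollary is thus an immediate specialization of Proposition~\ref{prop5.3}, and one could even state it as a one-line consequence, the only subtlety being to spell out (as above) why a subfield qualifies as a linear subspace in the sense used in Proposition~\ref{prop5.3}.
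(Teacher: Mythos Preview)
Your proof is correct and follows exactly the paper's approach: the paper's proof is the single sentence ``Every field $X\subseteq \mathbb{R}$ can be regarded as a linear space over the field $\mathbb{Q}$ of rational numbers,'' after which Proposition~\ref{prop5.3} applies. You simply supply more detail for why a subfield contains $\mathbb{Q}$ and is therefore a $\mathbb{Q}$-linear subspace, which is fine but not strictly necessary.
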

\begin{proof}
Every field $X\subseteq \mathbb{R}$ can be regarded as a linear
space over the field $\mathbb{Q}$ of rational numbers.
\end{proof}

\begin{corollary}\label{cor5.5}
The set $T$ of all real transcendental numbers is $3l$-universal
\end{corollary}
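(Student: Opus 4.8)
The plan is to deduce this immediately from Corollary~\ref{cor5.4} by taking $X$ to be the field of real algebraic numbers. First I would recall the standard fact that the set $\mathbb{A}$ of all (complex) algebraic numbers is a field --- it is the algebraic closure of $\mathbb{Q}$ inside $\mathbb{C}$ --- and that consequently $\mathbb{A}\cap\mathbb{R}$, the set of all \emph{real} algebraic numbers, is a subfield of $\mathbb{R}$: it contains $0$ and $1$, and the sum, difference, product, and quotient (by a nonzero element) of real algebraic numbers is again real and algebraic. If one prefers to avoid invoking the algebraic closure, these closure properties can be checked directly: if $\alpha$ is a root of $p\in\mathbb{Q}[x]$ and $\beta$ a root of $q\in\mathbb{Q}[x]$, then $\alpha+\beta$ and $\alpha\beta$ are roots of suitable resultants of $p$ and $q$, and $1/\alpha$ is a root of the reversed polynomial $x^{\deg p}p(1/x)$.

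Next I would observe that, by the very definition of a transcendental number, the set $T$ of all real transcendental numbers is precisely the complement $\mathbb{R}\setminus(\mathbb{A}\cap\mathbb{R})$. Moreover this complement is nonempty: one may cite the classical transcendence of $e$ or $\pi$, or simply note that $\mathbb{A}\cap\mathbb{R}$ is countable (there are only countably many polynomials over $\mathbb{Q}$, each with finitely many roots) while $\mathbb{R}$ is uncountable, so $T\neq\varnothing$.

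Having verified that $\mathbb{A}\cap\mathbb{R}$ is a subfield of $\mathbb{R}$ with $\mathbb{R}\setminus(\mathbb{A}\cap\mathbb{R})\neq\varnothing$, Corollary~\ref{cor5.4} applies verbatim and yields that $T=\mathbb{R}\setminus(\mathbb{A}\cap\mathbb{R})$ is $3l$-universal, which is the assertion. There is essentially no obstacle here: the only nonroutine ingredient is the field structure of the real algebraic numbers, which is entirely standard, so the whole argument reduces to a one-line invocation of the preceding corollary.
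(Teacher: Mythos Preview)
Your proposal is correct and matches the paper's own proof essentially verbatim: the paper simply notes that $\mathbb{R}\setminus T$ is the set of real algebraic numbers, which is a subfield of $\mathbb{R}$, and then invokes Corollary~\ref{cor5.4}. Your additional remarks about why the real algebraic numbers form a field and why $T\neq\varnothing$ are fine elaborations of what the paper leaves as ``well known.''
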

\begin{proof}
$\mathbb{R}\backslash T$ is the set of all real algebraic numbers.
It is well known that this is a subfield of the field $\mathbb{R}$.
\end{proof}

\begin{proposition}\label{prop5.6}
Let $X$ be a linear subspace of $\mathbb{R}$. Then $X$ is
$3l$-universal if and only if $X=\mathbb{R}$.
\end{proposition}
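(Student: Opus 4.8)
The plan is to prove both implications, of which only the converse carries content. For the direction $X=\mathbb{R}\Rightarrow X$ is $3l$-universal: this is immediate, since for any $a,b\in\mathbb{R}_+$ the points $x_1=a+b$, $x_2=b$, $x_3=0$ lie in $\mathbb{R}$ and satisfy $|x_1-x_2|=a$, $|x_2-x_3|=b$, $|x_1-x_3|=a+b$; equivalently, every three-point metric subspace of $\mathbb{R}$ embeds into $\mathbb{R}$ by the inclusion map, so $\mathbb{R}$ is $3l$-universal.

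For the converse I would argue as follows. Suppose $X$ is a $\mathbb{Q}$-linear subspace of $\mathbb{R}$ that is $3l$-universal, and fix an arbitrary $a\in\mathbb{R}_+$. Applying the definition of $3l$-universality to this $a$ together with $b=0$, I obtain $x_1,x_2,x_3\in X$ with $d(x_1,x_2)=|x_1-x_2|=a$ (the relations involving $x_3$ just force $x_2=x_3$ and are irrelevant). Since $X$ is a linear subspace of $\mathbb{R}$ over $\mathbb{Q}$, it is closed under differences and under multiplication by the rational $-1$; hence $a=|x_1-x_2|\in\{x_1-x_2,\;x_2-x_1\}\subseteq X$. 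As $a\in\mathbb{R}_+$ was arbitrary, this proves $\mathbb{R}_+\subseteq X$.

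It then remains to upgrade $\mathbb{R}_+\subseteq X$ to $X=\mathbb{R}$: for any $t\in\mathbb{R}$ with $t<0$ we have $-t\in\mathbb{R}_+\subseteq X$, whence $t=(-1)(-t)\in X$, again using that $X$ is a $\mathbb{Q}$-subspace. Together with $\mathbb{R}_+\subseteq X$ this yields $\mathbb{R}\subseteq X$, i.e. $X=\mathbb{R}$, contradicting properness unless $X=\mathbb{R}$.

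I do not expect a real obstacle here; the argument is short once one observes that $3l$-universality already produces, for every $a\geqslant 0$, a pair of points of $X$ at distance $a$, and that a linear subspace must contain the corresponding difference. The one point worth flagging is the contrast with Proposition~\ref{prop5.3}: there $\mathbb{R}\backslash X$ can be $3l$-universal for \emph{every} proper subspace $X$, whereas here it is precisely the difference-closed (linear) structure of $X$ itself that rules out a proper subspace being $3l$-universal. If one prefers to avoid the degenerate triangle $b=0$, the same computation goes through verbatim with any fixed positive value of $b$.
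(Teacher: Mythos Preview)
Your proof is correct and follows essentially the same approach as the paper: both arguments use $3l$-universality to realize an arbitrary positive number $a$ as $|x-y|$ with $x,y\in X$, and then invoke closure under differences to conclude $a\in X$. The paper phrases this as a contradiction (pick $a\in\mathbb{R}\setminus X$ and derive $a\in X$) whereas you argue directly that $\mathbb{R}_+\subseteq X$ and then $\mathbb{R}\subseteq X$, but the mathematical content is identical.
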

\begin{proof}
It is clear that $\mathbb{R}$ is $3l$-universal. Suppose that $X\neq
\mathbb{R}$. Let $a$ be a positive number from $\mathbb{R}\backslash
X$. If $X$ is $3l$-universal, then there is $x,y \in X$ such that
$|x-y| = a$. We may assume that $x\geqslant y$. Hence we obtain the
contradiction $X\ni (x-y) = |x-y|\notin X$.
\end{proof}
A simple modification of the proof of Proposition~\ref{prop5.3}
shows that the implication
\begin{itemize}
  \item if $\mathbb{R}\backslash X \neq \varnothing$, then there is
  an isometric embedding $f:A\to\mathbb{R}\backslash X$
\end{itemize}
holds for every finite subset $A$ of $\mathbb{R}$ and every linear
subspace $X$ of $\mathbb{R}$. In particular, we obtain the next
interesting generalization of Corollary~\ref{cor5.5}.
\begin{corollary}\label{cor5.7}
Let $A$ be a finite subset  of $\mathbb{R}$. Then there is an
isometric embedding of $A$ in the set $T$ of all real transcendental
numbers.
\end{corollary}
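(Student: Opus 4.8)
The plan is to deduce Corollary~\ref{cor5.7} from the general statement displayed just before it, whose proof is in turn a mild variant of that of Proposition~\ref{prop5.3}.

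First I would record the standard fact, already invoked in the proof of Corollary~\ref{cor5.5}, that the set $\mathbb{R}\setminus T$ of real algebraic numbers is a subfield of $\mathbb{R}$, hence in particular a linear subspace of $\mathbb{R}$ over $\mathbb{Q}$, and that $T\neq\varnothing$ (so $\mathbb{R}\setminus T\neq\mathbb{R}$). Thus it suffices to prove the general claim: for every finite $A\subseteq\mathbb{R}$ and every proper linear subspace $X$ of $\mathbb{R}$ there is an isometric embedding $f\colon A\to\mathbb{R}\setminus X$. Applying it with $X=\mathbb{R}\setminus T$ then yields the corollary.

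For the general claim I would copy the construction from the proof of Proposition~\ref{prop5.3} almost verbatim, the three points $x^{*},y^{*},z^{*}$ being replaced by the finitely many points of $A=\{a_1,\dots,a_m\}$. Fix a Hamel basis $B$ of $X$, extend it to a Hamel basis $B_1\supseteq B$ of $\mathbb{R}$, and choose $b_0\in B_1\setminus B$, which is possible since $X\neq\mathbb{R}$. Write the unique representations of $a_1,\dots,a_m$ as finite rational linear combinations of elements of $B_1$, let $r_0$ be an integer strictly larger than all absolute values of the rational coefficients that occur, and set
$$
f(a_i):=a_i+(r_0+1)\,b_0,\qquad i=1,\dots,m,
$$
so that $f$ is the translation of $A$ by $(r_0+1)b_0$. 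Then $f$ is trivially an isometric embedding, since $f(a_i)-f(a_j)=a_i-a_j$ for all $i,j$; the only point to verify is $f(a_i)\in\mathbb{R}\setminus X$. In the expansion of $f(a_i)$ over $B_1$ the coefficient of $b_0$ equals $c_i+r_0+1$, where $c_i$ is the (possibly zero) coefficient of $b_0$ in the expansion of $a_i$; since $|c_i|\leqslant r_0$ this coefficient is positive, hence nonzero, and because $b_0\notin B$ the uniqueness of Hamel representations then forces $f(a_i)\notin X$.

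The argument is essentially routine; the one place requiring a little care --- and the reason the translation is by a coefficient $r_0+1$ exceeding everything in sight rather than by an arbitrary nonzero rational --- is the possibility that some $a_i$ already involves $b_0$ in its expansion, so that a careless shift could cancel the $b_0$-coefficient and land back in $X$. I would also note that for the particular set $T$ one can bypass Hamel bases entirely: $\mathbb{R}\setminus T$ is countable, hence $\bigcup_{i=1}^{m}\bigl((\mathbb{R}\setminus T)-a_i\bigr)$ is countable and, $\mathbb{R}$ being uncountable, there is $t\in\mathbb{R}$ outside this union; then $f(x):=x+t$ maps $A$ isometrically into $T$.
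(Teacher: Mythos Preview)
Your main argument is correct and follows exactly the route the paper indicates: the paper does not give a self-contained proof of Corollary~\ref{cor5.7} but states just before it that ``a simple modification of the proof of Proposition~\ref{prop5.3}'' yields the general embedding claim for any proper linear subspace $X$, and then specializes to $X=\mathbb{R}\setminus T$; you have spelled out precisely that modification, including the care needed with the $b_0$-coefficient that the paper leaves implicit. Your alternative countability argument is a genuine bonus not in the paper --- it is shorter and avoids Hamel bases entirely, at the cost of applying only to the specific case where $\mathbb{R}\setminus X$ is countable rather than to arbitrary proper linear subspaces.
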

In the following example we construct a set $X\subseteq
\mathbb{R}_+$ such that the distance set $D_X=\{|x-y|:x,y\in X\}$ is
the same as $\mathbb{R}_+$ but $X$ is not $3l$-universal. To this
end, we use the triadic Cantor set. Recall the definition. Let $x\in
[0,1]$ and expand $x$ as
\begin{equation}\label{eq5.7}
    x=\sum\limits_{n=1}^{\infty}\frac{b_n(x)}{3^n}, \quad
    b_n(x)\in\{0,1,2\}.
\end{equation}
The Cantor set $C$ is the set of points from $[0,1]$ which have
expansion~(\ref{eq5.7}) using only the digits $0$ and $2$. Thus
$x\in C$ if and only if $x$ has a triadic representation
\begin{equation}\label{eq5.8}
    x=\sum\limits_{n=1}^{\infty}\frac{2\alpha_m}{3^m}
\end{equation}
where $\alpha_m=\alpha_m(x) \in \{0,1\}$.
\begin{example}\label{ex5.8}
Define a set $C^e$ as
$$
C^e=\bigcup\limits_{n=0}^{\infty}3^nC
$$
where $3^nC=\{3^nx:x\in C\}$. It follows from~(\ref{eq5.8}) that a
real number $t$ belongs to $C^e$ if and only if $t$ has a base 3
expansion with the digits $0$ and $2$ only, i.e.,
\begin{equation}\label{eq5.9}
t=\sum\limits_{j=-\infty}^{M(t)}a_j(t)3^j
\end{equation}
where $M(t)\in \mathbb{Z}$ and $a_j(t)\in\{0,2\}$. It is easy to
prove (see, for example, \cite[Chapter 8, Example 3]{GO}) that the
distance set of the triadic Cantor set $C$ is $[0,1]$, $D_C=[0,1]$.
Consequently we obtain $D_{3^nC}=[0,3^n]$ for every natural number
$n$. Hence we have $\mathbb{R}_+\supseteq D_{C^e}\supseteq
\bigcup\limits_{n=0}^{\infty}[0,3^n]=\mathbb{R}_+$, so that
$D_{C^e}=\mathbb{R}_+$.
\end{example}
We claim that there are no points $x, y, z\in C^e$ such that
\begin{equation}\label{eq5.10}
|x-y|=\frac{1}{3}\mbox{ and } |x-z|=\frac{1}{6}
\end{equation}
Suppose contrary that $x, y, z\in C^e$ and satisfy~(\ref{eq5.10}).
Let $j_0$ be the largest index $j$ such that $a_j(x)\neq a_j(y)$
where $a_j(x)$ and $a_j(y)$ are the coefficients from
expansion~(\ref{eq5.9}) for $x$ and, respectively, $y$. Then we
obtain
$$
\frac{1}{3}=|x-y|\geqslant 2\cdot
3^{j_0}-|\sum\limits_{j=-\infty}^{j_0-1}(a_j(x)-a_j(y))3^j|\geqslant
2\cdot3^{j_0}-\sum\limits_{j=-\infty}^{j_0-1}|a_j(x)-a_j(y)|3^j
$$
$$
\geqslant
2\cdot3^{j_0}-\sum\limits_{j=-\infty}^{j_0-1}2\cdot3^j=2\cdot3^{j_0}-3\cdot3^{j_0-1}=3^{j_0}.
$$
Consequently the inequality $j_0\leqslant -1$ holds. Hence the
equality
\begin{equation}\label{eq5.11}
a_j(x)=a_j(y)
\end{equation}
holds for all $j\geqslant 0$. Similarly we have
\begin{equation}\label{eq5.12}
a_j(x)=a_j(z)
\end{equation}
for all $j\geqslant 0$. Equations~(\ref{eq5.11}) and ~(\ref{eq5.12})
imply that there is a constant $c_1\geqslant 0$ such that
$$
c_1=\sum\limits_{j=0}^{M(x)}a_j(x)3^j=\sum\limits_{j=0}^{M(y)}a_j(y)3^j=\sum\limits_{j=0}^{M(z)}a_j(z)3^j.
$$
Define the points $x^*, y^*, z^*$ as $x^*=x-c_1$, $y^*=y-c_1$ and
$z^*=z-c_1$. Then the equalities $|x^*-y^*|=\frac{1}{3}$ and
$|x^*-z^*|=\frac{1}{6}$ hold and $x^*, y^*, z^* \in C$. Since
$(\frac{1}{3}, \frac{2}{3})\cap C=\varnothing$ and
$C\subseteq[0,1]$, the equality $|x^*-y^*|=\frac{1}{3}$ shows that
the following six combinations
$$
(i_1)\quad x^*=0, \: y^*=\frac{1}{3},\qquad (i_2)\quad
x^*=\frac{1}{3}, \: y^*=\frac{2}{3},\qquad (i_3)\quad
x^*=\frac{2}{3},\:
 y^*=1,
$$
$$
(i_4)\quad x^*=\frac{1}{3},\:  y^*=0,\qquad (i_5)\quad
x^*=\frac{2}{3},\: y^*=\frac{1}{3},\qquad (i_6)\quad x^*=1,\:
 y^*=\frac{2}{3}
$$
are the only possible. This equality and the next one
$|x^*-z^*|=\frac{1}{6}$ give us the membership relation
$$
z^*\in \left\{-\frac{1}{6},\: \frac{1}{6},\: \frac{1}{2},
\:\frac{5}{6},\: \frac{7}{6}\right\}.
$$
Since $C\cap \left\{-\frac{1}{6},\: \frac{1}{6},\: \frac{1}{2},
\:\frac{5}{6},\: \frac{7}{6}\right\}=\varnothing$ we obtain that
$z^*\notin C$, contrary to the supposition $x^*, y^*, z^* \in C$.
Thus $C^e$ is not $3l$-universal.
\begin{remark}\label{rem5.9}
$C^e$ is the smallest (w.r.t. the relation $\subseteq$) set
$X\subseteq\mathbb{R}_+$ satisfying the conditions $C\subseteq X$
and $3X=X$.
\end{remark}

{\bf Oleksiy Dovgoshey}

Institute of Applied Mathematics and Mechanics of NASU, R. Luxemburg Str. 74, Donetsk 83114, Ukraine

{\bf E-mail: } aleksdov@mail.ru
\bigskip

{\bf Evgeniy Petrov}

Institute of Applied Mathematics and Mechanics of NASU, R. Luxemburg
Str. 74, Donetsk 83114, Ukraine

{\bf E-mail: } eugeniy.petrov@gmail.com
\bigskip

{\bf Galina Kozub}

Donetsk National University, Universitetska Str. 24, Donetsk 83055,
Ukraine

{\bf E-mail: } kozub\textunderscore galina@mail.ru

\end{document}